\newcommand{\bt}   {\begin{theorem}}
\newcommand{\et}   {\end  {theorem}}
\newcommand{\bl}   {\begin{lemma}}
\newcommand{\el}   {\end  {lemma}}
\newcommand{\bp}   {\begin{prop}}
\newcommand{\ep}   {\end  {prop}}
\newcommand{\bc}   {\begin{cor}}
\newcommand{\ec}   {\end  {cor}}
\newcommand{\bd}   {\begin{defn}}
\newcommand{\ed}   {\end  {defn}}
\newcommand{\ba}   {\begin{array}}
\newcommand{\ea}   {\end  {array}}
\newcommand{\be}   {\begin{enumerate}}
\newcommand{\ee}   {\end  {enumerate}}
\def\eq#1\en{\begin{equation}#1\end{equation}}
\def\eqsplit#1\ensplit{
    \begin{equation}\begin{split}#1\end{split}\end{equation}
    }
\def\eqalign#1\enalign{
    \begin{align}#1\end{align}
    }
\def\eqmul#1\enmul{
    \begin{multline}#1\end{multline}
    }
\newcommand{\eqarrstar} {\begin{eqnarray*}}
\newcommand{\enarrstar} {\end{eqnarray*}}
\newcommand{\eqarray}   {\begin{eqnarray}}
\newcommand{\enarray}   {\end{eqnarray}}
\newcommand{\labelcounter}[2]{{%
    \stepcounter{#1}
    \protected@write\@auxout{}%
    {\string\newlabel{#2}{{\csname the#1\endcsname}{\thepage}}}%
    {\ref{#2}}
    }}
\newcommand{\sss}   { \scriptscriptstyle }
\newcommand{\spose}[1] {{\hbox to 0pt{#1\hss}} }
\newcommand{\ltapprox} {\mathrel{\spose{\lower 3pt\hbox{$\mathchar"218$}}
 \raise 2.0pt\hbox{$\mathchar"13C$}}}
\newcommand{\gtapprox} {\mathrel{\spose{\lower 3pt\hbox{$\mathchar"218$}}
 \raise 2.0pt\hbox{$\mathchar"13E$}}}
\newcommand{\bra}[1]    {\left \langle #1 \right |}
\newcommand{\namedref}{\S\ref}
\numberwithin{equation}{section}
\newcommand{\tprob}{\mathbb P}
\newcommand{\mprob}[1]{\mathbb P\br{#1}}
\newcommand{\mn}[1]{\|#1\|_{\sigma}}
\newcommand{\mexpec}[1]{\mathbb E\hspace{-.2em}\brh{#1}}
\newcommand{\mcexpec}[2][\tau]{\hat {\mathbb E}_{#1}\hspace{-.3em}\brh{#2}}
\newcommand{\msprob}[2][\sigma]{\mathbb P_{\hspace{-.15em} #1} \hspace{-.20em} \br{#2}}
\newcommand{\mcprob}[2][\tau]{\hat{\mathbb P}_{\hspace{-.15em} #1} \hspace{-.20em} \br{#2}}
\newcommand{\eqn}[1]{\begin{equation} #1 \end{equation}}
\newcommand{\indic}[1]{{{\bf 1}{\scriptstyle  \bra{#1}}}} 
\newcommand{\br}[1]{\left( #1 \right)}       
\renewcommand{\bra}[1]{\left\{ #1 \right\}}  
\newcommand{\brh}[1]{\left[ #1 \right]}    
\newcommand{\brc}[3]{\left#1 #3 \right#2}  
\newcommand{\mO}[1]{{\cal O}\br{#1}}    
\newcommand{\qed}    {\hfill$\Box$}
\newcommand{\smallsup}[1] {{\scriptscriptstyle{(\kern-.03em{#1}}\kern-.03em)}}
\newtheorem{thm}{Theorem}[section]
\newtheorem{lem}[thm]{Lemma}
\newtheorem{prop}[thm]{Proposition}
\newtheorem{cor}[thm]{Corollary}
\newtheorem{rem}[thm]{Remark}
\begin{document}		
\title{A geometric preferential attachment model \\ with fitness}
	\date{3 January 2008}
	\author{H. van den Esker
	\thanks{Delft University of Technology,
Electrical Engineering, Mathematics and Computer Science, P.O. Box
5031, 2600 GA Delft, The Netherlands. E-mail: {\tt
H.vandenEsker@tudelft.nl}}}
	\maketitle
	
	\begin{abstract}
		\noindent We study a random graph $G_n$, which  combines aspects of geometric random graphs and preferential attachment. 
		The resulting random graphs have power-law degree sequences with finite mean and possibly infinite variance. In particular, the power-law exponent can be any value larger than 2.
		
		The vertices of $G_n$ are $n$ sequentially generated vertices chosen at random in the unit sphere in $\mathbb R^3$. 
		A newly added vertex has $m$ edges attached to  it and the endpoints of these edges are connected to old vertices or to the added vertex itself.
		The vertices are chosen with probability	proportional to their current degree plus some initial attractiveness and multiplied by a function, depending on the geometry.
	\end{abstract}
	
	\begin{flushleft}
	\emph{Keywords:} Complex networks; Random geometric graphs; Scale free graphs; Preferential attachment; Power-law distributions; Ad hoc networks
	
	\bigskip
	
	\emph{2000 Mathematics Subject Classification}: 05C80(Primary) 05C07(Secondary)
	\end{flushleft}

		
			\newcommand{\mproef}[2]{#1}	
	
		\newcommand{\mR}{A_{\sigma,n}}
	\newcommand{\head}[1]{ \textsc{to}(#1)}	
	\newcommand{\mX}[2][\sigma]{{ #2}_{#1}}
	\newcommand{\mhX}[2][\sigma]{ \hat { #2}_{#1}}
	\newcommand{\mN}[2][\sigma]{N^{#2}_{#1}}
	\newcommand{\mXS}[3]{#3_{#1}^{\sss (#2)}}
	\newcommand{\mv}[2]{\mXS{#1}{#2}{v}}
	\newcommand{\mb}[2]{\mXS{#1}{#2}{b}}
	\newcommand{\my}[2]{\mXS{#1}{#2}{y}}
	\newcommand{\mhv}[2]{\mXS{#1}{#2}{\hat v}}
	\newcommand{\mhb}[2]{\mXS{#1}{#2}{\hat b}}
	\newcommand{\mhy}[2]{\mXS{#1}{#2}{\hat y}}

	\section{Introduction} \label{sec:intrduction}	
	Preferential attachment models are proposed by Barab\'asi and Albert \cite{barabasi-1999-286} as models for large-scale networks like the Internet, electrical networks, telephone networks, and even complex biological networks.
	These networks grow in time, because, for example, new routers, transform houses, switchboards or proteins are added to the network.
	The behavior can be modeled by means of a random graph process.
		A random graph process is a stochastic process that describes a random graph evolving with time.
		At each time step, the random graph is updated using a given \emph{rule of growth}, which will be specified later.
	
	In literature a number of different rules of growth are explored.
	For example, each time step we add or remove edges/vertices \cite{ccooafrie01}, or, more advanced, copy parts of the graph \cite{kumar00stochastic}. 
	Furthermore, there is freedom in the choice how to connect endpoints of newly added edges.	
	Mostly, one randomly chooses the endpoint over the vertices, or proportional to the degree. 
	Another possibility is to assign to each vertex a \emph{fitness}. 
	In \cite{parid?, PhysRevLett.85.4633} \emph{additive fitness} is explored
	where one chooses proportional to the degree plus some (random) value. 
	In  \cite{ergun-2002-303, norros3} \emph{multiplicative fitness} is explored where each vertex has a random fitness and one chooses a vertex proportional to the degree times the fitness. 
	In this paper we use a constant additive fitness and a variant of multiplicative fitness, depending on the distance between vertices.
	
	Many large networks of interest have power-law degree sequences, by which we mean that the number of vertices with degree $k$ falls off as $k^{-\tau}$ for some exponent $\tau>1$.
	The parameter $\tau$ is called the power-law exponent.
		Depending on the value of $\tau$ we classify the following three categories: the \emph{infinite mean case}, \emph{the finite mean and infinite variance case}, and the \emph{finite variance case}, which corresponds to $\tau \in(1,2)$,  $\tau \in(2,3)$ and $\tau>3$, respectively.
		
		These categories are of interest, because the behavior of the typical distance is determined by the power-law exponent $\tau$.
		Results in the literature show that if $\tau \in(1,2)$ the typical distance is bounded by some constant, if $\tau\in(2,3)$ the typical distance is concentrated around $\log\log n$ and if $\tau>3$ it is concentrated around $\log n$, where $n$ is the number of vertices of the graph, see \cite{infmean, diaminpref, finstub, infvar, norros3}.

	A large number of graph models have been introduced to describe complex networks, 
	but often the underlying geometry is ignored. 
	In general it is difficult to get rigorous results for properties like the degree distribution, typical distances or diameter, even if one disregards the geometry. 		
	However, in wireless ad-hoc networks the geometry is of great importance,
	since in these networks nodes are spread over some surface and nodes can only communicate with neighbors within a certain range, depending on the geometry.

	In this paper we will rely on the geometric preferential attachment (GPA) model introduced in \cite{flax04} and extended in \cite{flax07} by the same authors. 
The GPA model is a variant of the well known Barab\'asi-Albert (BA) model. 
	In the BA model new vertices are added to the graph one at a time. 
	Each new vertex is connected to $m$ of the existing vertices, 
	where we choose to connect to an old vertex proportional to its degree.
	In the GPA model each vertex has a position on a surface, and we choose to connect to an old vertex proportional to its degree times a non-constant multiplicative value.
	This multiplicative value depends on the distance of the old vertex and the newly added vertex.
	For instance, let the multiplicative constant be 1 if the vertices are at distance at most $r_n$, and otherwise zero. 
	The latter attachment rule essentially describes the construction of a simplified wireless ad-hoc network.

	\subsection{Definition of the model} \label{gpaf:sec:model}			
	In this section we will introduce the Geometric Preferential Attachment model with fitness (GPAF).
	The GPAF model is described by a random graph process $\{G_\sigma\}_{\sigma=0}^n$, which we will study for large values of $n$. 
	For $0 \leq \sigma \leq n$, each vertex of the graph $G_\sigma=(V_\sigma,E_\sigma)$ is positioned on the sphere $S \subset \mathbb R^3$.
	The radius of the sphere $S$ is taken equal to $1/(2\sqrt{\pi})$, so that, conveniently, $\text{Area}(S)=1$. 
	The vertices of the graph $G_\sigma$ are given by $V_\sigma=\{1,2,\ldots,\sigma\}$ and $E_\sigma$ is the set of edges. 
	The position of vertex $v\in V_\sigma$ in the graph $G_\sigma$ is given by $x_v\in S$ and the degree at time $\sigma$ is given by $d_\sigma(v)$.

	In total we need 4 parameters to describe the GPAF model.
	The first parameter of the model is $m=m(n)>0$, which is the number of edges added in every time step. 
	The second parameter is $\alpha \geq 0$, which is a  measure of the bias toward self-loops. 
	The third one is $\delta > -m$, which is the initial attractiveness of a vertex. 
	And, finally, the fourth parameter is a function $F_n:[0,\pi]\rightarrow {\mathbb R}_+$, where the value $F_n(u)$ is a indicator of the attraction between two vertices at distance $u$.

	Before we give the model definition, we first will explain the use of the parameter $\alpha$.
	Assume that the graph $G_\sigma$ is given, consisting of the vertices $V_\sigma$. 
	We construct the graph $G_{\sigma+1}$ by choosing vertex $x_{\sigma+1}$ uniformly at random in $S$ and add it to $G_\sigma$ with $m$ directed edges emanating from the vertex $x_{\sigma+1}$. 
			Let, for $\sigma=1,2,\ldots,n-1$, 
			\eqn{\label{eq:Tt}
			 T_{\sigma,n}(u)=\sum_{v=1}^\sigma (d_\sigma(v) +\delta) F_n(|x_v-u|),
			}
			where $|x_v-u|\in[0,\pi]$ is the angular distance from $u$ to $u_0$ along a circle with radius $1/(\sqrt{2})$ over the sphere $S$.
			Furthermore, let the endpoints of the $m$ emanating edges be given by the vertices $\mv{\sigma+1}1,\ldots, \mv{\sigma+1}m$.
			Intuitively, we would like to choose the endpoints at random (with replacement) from $V_\sigma$, such that $v \in V_\sigma$ is chosen with probability
			\begin{align*}
				\msprob[\sigma]{\mv{\sigma+1}i=v}&=\frac{(d_{\sigma}(v)+\delta)F_n(|x_v-x_{\sigma+1}|)}{T_{\sigma,n}(x_{\sigma+1})},&
			\end{align*}
		where $\msprob[\sigma]{\,\cdot\,}=\mprob{\,\cdot\, | G_\sigma, x_{\sigma+1}}$.		
	However, the above given rule of growth is not well-defined. To see this, consider the simplified model for wireless ad-hoc networks, i.e., $F_n(x)=\indic{x \leq r_n}$. Then, for any $\sigma$, there is a positive probability that there are no vertices within reach of the newly added vertex $x_{\sigma+1}$ and therefore $T_{\sigma,n}(x_{\sigma+1})=0$.
	
	Introducing self-loops solves this problem and for this the additional paramater $\alpha$ is introduced. We will follow the solution given by the authors of \cite{flax07} for the GPA model:
		
		\bigskip
		
	\noindent {\bf Rules of growth for $\alpha>0$:}	
	\begin{itemize}
		\item {\bf Initial Rule} ($\sigma=0$):		
			To initialize the process, we start with $G_0$ being the empty graph.
		
		\item {\bf Growth Rule} (at time $\sigma+1$):		
			We choose vertex $x_{\sigma+1}$ uniformly at random in $S$ and add it to $G_\sigma$ with $m$ directed edges emanating from the vertex $x_{\sigma+1}$. 
			Let the endpoints of the $m$ emanating edges given by the vertices $\mv{\sigma+1}1,\ldots, \mv{\sigma+1}m$.
			We choose the endpoints at random (with replacement) from $V_\sigma$, such that $v \in V_\sigma$ is chosen with probability
			\begin{align}
			\label{eq:ch:1}				\msprob[\sigma]{\mv{\sigma+1}i=v}&=\frac{(d_{\sigma}(v)+\delta)F_n(|x_v-x_{\sigma+1}|)}{\max\{ T_{\sigma,n}(x_{\sigma+1}), \alpha\mexpec{T_{\sigma,n}(x_{\sigma+1})}/2\}},&
				\intertext{and}				\msprob[\sigma]{\mv{\sigma+1}i=\sigma+1}&=1-\frac{ T_{\sigma,n}(x_{\sigma+1})}{\max\{ T_{\sigma,n}(x_{\sigma+1}),\alpha\mexpec{T_{\sigma,n}(x_{\sigma+1})}/2\}},&
				\label{eq:ch:2}
			\end{align}
			for $i \in \{1,2,\ldots,m\}$.
	\end{itemize}
	
	\bigskip

	The above given random graph model is well defined, since the denominator is always strictly positive. Indeed, the following lemma calculates the value of $\mexpec{T_{\sigma,n}(x_{\sigma+1})}$ which is strictly positive. 
	\begin{lem}\label{m_dk(u)}
		For any fixed point $v\in S$, 
		\eqn{\label{calcIf}
		\int_S F_n(|v-u|)\,\mathrm{d}u=I_n,
		}
		where
		\eqn{ \nonumber
		I_n=\frac{1}{2}\int_{x=0}^\pi F_n(x)\sin x \,\mathrm{d} x.
		}
		As a consequence, if $U$ is a randomly chosen  point from $S$, then
		\eqn{\label{calcETn}
		\mexpec{T_{\sigma,n}(U)}=I_n(2m+\delta)\sigma.
		}
	\end{lem}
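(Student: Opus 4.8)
The plan is to prove \eqref{calcIf} by a change of variables that exploits the rotational symmetry of the sphere, and then to deduce \eqref{calcETn} from it using linearity of expectation together with an elementary deterministic count of the total degree in $G_\sigma$. For \eqref{calcIf}, fix $v\in S$; the integrand $F_n(|v-u|)$ depends on $u$ only through its angular distance to $v$. I would parametrize $S$ by geodesic polar coordinates based at $v$: write $u=u(\theta,\phi)$ with $\theta=|v-u|\in[0,\pi]$ the colatitude measured from $v$ and $\phi\in[0,2\pi)$ the azimuth. On a sphere of radius $\rho$ the surface measure is $\rho^2\sin\theta\,\mathrm{d}\theta\,\mathrm{d}\phi$, and here $\rho=1/(2\sqrt{\pi})$, so $\rho^2=1/(4\pi)$ --- precisely the value forced by $\mathrm{Area}(S)=\int_0^{2\pi}\!\int_0^{\pi}\rho^2\sin\theta\,\mathrm{d}\theta\,\mathrm{d}\phi=1$. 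Since $F_n\ge 0$ is measurable, Tonelli's theorem gives
\[
\int_S F_n(|v-u|)\,\mathrm{d}u = \int_0^{2\pi}\!\!\int_0^{\pi} F_n(\theta)\,\frac{\sin\theta}{4\pi}\,\mathrm{d}\theta\,\mathrm{d}\phi = \frac12\int_0^\pi F_n(\theta)\sin\theta\,\mathrm{d}\theta = I_n,
\]
which is independent of $v$, as claimed.

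For \eqref{calcETn}, I would condition on $G_\sigma$. The point $x_{\sigma+1}$, abbreviated $U$, is uniform on $S$ and independent of $G_\sigma$, so conditionally on $G_\sigma$ each $x_w$ is a fixed point of $S$ and, by linearity of expectation and \eqref{calcIf} applied with that fixed point,
\[
\mexpec{T_{\sigma,n}(U)\mid G_\sigma} = \sum_{w=1}^\sigma (d_\sigma(w)+\delta)\int_S F_n(|x_w-u|)\,\mathrm{d}u = I_n\sum_{w=1}^\sigma (d_\sigma(w)+\delta).
\]
Taking expectations over $G_\sigma$, it remains to evaluate $\mexpec{\sum_{w=1}^\sigma d_\sigma(w)}$. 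But $G_\sigma$ is built from the empty graph $G_0$ by $\sigma$ applications of the Growth Rule, each adding exactly $m$ edges, and every edge contributes $2$ to $\sum_w d_\sigma(w)$ (a self-loop at $w$ being counted twice towards $d_\sigma(w)$); hence $\sum_{w=1}^\sigma d_\sigma(w)=2m\sigma$ \emph{deterministically}. Therefore $\mexpec{T_{\sigma,n}(U)}=I_n(2m\sigma+\delta\sigma)=I_n(2m+\delta)\sigma$.

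There is no real obstacle here; the two points that deserve a little care are getting the normalizing constant right in the polar-coordinate computation (it is $\mathrm{Area}(S)=1$ that forces $\rho^2=1/(4\pi)$ and hence the factor $\tfrac12$ left after integrating out the azimuth), and the bookkeeping of self-loops, where one uses the handshake convention that a self-loop at $w$ adds $2$ to $d_\sigma(w)$, so that $\sum_w d_\sigma(w)=2|E_\sigma|=2m\sigma$ holds exactly rather than merely in expectation. As a closing remark, \eqref{calcETn} identifies $\mexpec{T_{\sigma,n}(x_{\sigma+1})}=I_n(2m+\delta)\sigma$ as a strictly positive constant whenever $I_n>0$, so the denominators in \eqref{eq:ch:1}--\eqref{eq:ch:2} are always strictly positive and the Growth Rule is well defined.
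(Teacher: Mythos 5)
Your proof is correct and follows essentially the same route as the paper: rotational invariance plus spherical polar coordinates centred at $v$ for \eqref{calcIf} (the factor $2\pi r_0^2=1/2$ being exactly your normalization), and conditioning on $G_\sigma$ together with the deterministic identity $\sum_w (d_\sigma(w)+\delta)=(2m+\delta)\sigma$ for \eqref{calcETn}. Your explicit handshake-lemma bookkeeping of self-loops and the remark on well-definedness are just slightly more detailed versions of what the paper leaves implicit.
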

	\begin{proof}
		First note that $I_n$ does not depend on $v$ due to rotation invariance.		
		Thus, without loss of generality, we can assume that $v$ is at the north pole of the sphere. Using spherical coordinates, we find $du=r_0^2\sin \theta  \, \mathrm{d} \theta \,\mathrm{d} \varphi$, where $r_0=1/(2\sqrt{\pi})$, and $|v-u|=\theta$, so that:
		$$
		\int_SF_n(|v-u|)\,\mathrm{d}u=\int_{\varphi=0}^{2\pi}\int_{\theta=0}^{\pi}r_0^2 F_n(\theta) \sin \theta \,\mathrm{d} \theta \, \mathrm{d} \varphi =2\pi r_0^2\int\mproef{_{\theta=0}^\pi}{_0^\pi} F_n(\theta)\sin \theta\,\mathrm{d} \theta
		=I_n.
		$$
		For the second claim we calculate the expected value of $T_{\sigma,n}(U)$, \eqref{eq:Tt}, conditional on the graph $G_\sigma$:
		\begin{align*}
		\mexpec{T_{\sigma,n}(U)\,|\,G_\sigma}
		&=\sum_{v=1}^\sigma (d_\sigma(v) +\delta) \mexpec{F_n(|x_v-U|)\,|\,G_\sigma}
		\mproef{\\&}{\\&}=I_n\sum_{v=1}^\sigma (d_\sigma(v) +\delta) 
		=I_n (2m+\delta) \sigma,
		\end{align*}
		where we apply \eqref{calcIf} on $\mexpec{F_n(|x_v-U|)\,|\,G_\sigma}$, since $v \leq \sigma$ and, therefore, 
		\eqn{\label{eq.shrtcut.0}
		\mexpec{F_n(|x_v-U|)\,|\,G_\sigma}=\int_S F_n(|x_v-u|)d u=I_n.
		}
		Hence, $\mexpec{T_{\sigma,n}(U)}=\mexpec{\mexpec{T_{\sigma,n}(U)\,|\,G_\sigma}}=I_n (2m+\delta) \sigma$.
		\qed
	\end{proof}
	
	We use the abbreviations, for $u \in S$,
	\mproef{	\begin{align}
						\label{def:M}
						M_{\sigma,n}(u)&=\max\{ T_{\sigma,n}(u),\alpha \Theta I_n \sigma\}&
						&\text{and}&
						\mR(u)=F_n(|u-x_{\sigma+1}|),&
						\end{align}
				}{
						\begin{align}
						\label{def:M}
						M_{\sigma,n}(u)&=\max\{ T_{\sigma,n}(u),\alpha \Theta I_n \sigma\}&
						\mproef{&\text{and}&}{\nonumber\intertext{and}}
						\mR(u)&=F_n(|u-x_{\sigma+1}|),&
						\end{align}
				}
	where
	\eqn{\label{eq:def:theta}
		\Theta=\Theta(\delta,m)=(2m+\delta)/2.
		}
	
	As a consequence, we can rewrite the attachment rules as
	\begin{align}
\msprob[\sigma]{\mv{\sigma+1}i=v}&=\frac{(d_{\sigma}(v)+\delta)\mR(x_v)}{M_{\sigma,n}(x_{\sigma+1})}&
	\mproef{&\text{and}&}{\nonumber\intertext{and}}
	\msprob[\sigma]{\mv{\sigma+1}i=\sigma+1}&=1-\frac{ T_{\sigma,n}(x_{\sigma+1})}{M_{\sigma,n}(x_{\sigma+1})},&	
		\label{rule.sel}
	\end{align}
	for $i \in \{1,2,\ldots,m\}$.
	
	\begin{rem}
		In the above description we add directed edges to the graph and therefore we construct a directed graph.
		For questions about the connectedness and diameter of the graph we ignore the direction of the edges, but we need the direction of the edges in the proofs of the main results.
	\end{rem}

	\begin{rem}\label{canonical.functions}
	In this paper we will illustrate the theorems using the canonical functions
	\begin{align}
		F_n^\smallsup0(u)&\equiv1,& 
		F_n^\smallsup1(u)&=\indic{|u| \leq r_n} &
		&\text{and}&
		F_n^\smallsup2(u)&=\frac{1}{\max\{n^{-\psi},u\}^{\beta}},&
	\end{align}
	where $r_n \geq n^{\varepsilon-1/2}$, $\varepsilon < 1/2$, $\psi <1/2$ and $\beta \in (0,2) \cup (2,\infty)$.
	The canonical function $F_n^\smallsup0$ implies that the vertices are chosen proportional to the degree, and, furthermore, the geometry is ignored, the model is then equivalent to the PARID model, see \cite{parid?} or section \namedref{seq:heuristics}.
	The function $F_n^\smallsup1$ implies that a new vertex can only connect to vertices at distance at most $r_n$.
	Finally, canonical function $F_n^\smallsup2$ implies that vertices are chosen proportional to the degree, and, in contrast to $F_n^\smallsup0$, will prefer vertices close to the new vertex, since $F_n^\smallsup2$ is non-increasing as a function in $u$.
	\end{rem}

	\subsection{Heuristics and main results} \label{seq:heuristics}
	Using the results of \cite{flax07},
	which is a special case of our model when $\delta=0$, 
	together with the results of the \emph{PARID} model, 
	introduced in \cite{parid?}, 
	we will predict how the power-law exponent of the degree sequence will behave.
	
	Consider the PARID random graph process $\{G_\sigma'\}_{\sigma\geq0}$ as introduced in \cite{parid?} with constant weights equal to $m$.
	For this special case, we give a brief description of the model.
	
	The construction of the PARID graph $G_\sigma'=(V_\sigma',E_\sigma')$ depends on the graph $G_{\sigma-1}'$. 
	The rule of growth is as follow: add a vertex to the graph $G_{\sigma-1}'$ and from this vertex emanates $m$ edges. 
	The endpoints of these $m$ edges are chosen independently (with replacement) from the vertices of $G_{\sigma-1}'$. 
	The probability that vertex $v\in  V_{\sigma-1}'$ is chosen is proportional to the degree of vertex $v$ plus $\delta$, more specifically:
	\eqn{\label{parid.sel}
	\mprob{\text{choose vertex }v\,|\,G_{\sigma}'} =\frac{d_\sigma(v)+\delta}{(2m+\delta)\sigma}.
	}
	
	If $\alpha\leq 2$, $\delta>-m$ and $F_n =F_{n}^\smallsup0$, then the GPAF model coincides with the PARID model where the weight of each vertex is set to $m$. 	
	Note that, for the chosen parameters, 
	\begin{align*}
	A_{\sigma,n}(x_v)=F_n^\smallsup0(|x_v-x_{\sigma+1}|)&=1,& 
	I_n^\smallsup0&=\frac12\int_{x=0}^\pi \sin(x)\,\mathrm{d}x=1,&
	\end{align*}
	and
	\eqn{\nonumber
	\alpha \Theta I_n^\smallsup0 \sigma \leq 2\Theta \sigma=(2m+\delta)\sigma=\sum_{v\in V_\sigma} (d_\sigma(v) +\delta)=T^\smallsup0_{\sigma, n}(x_{\sigma+1}),
	}
	thus $M_{\sigma,n}^\smallsup0(x_{\sigma+1})=T_{\sigma,n}^\smallsup0(x_{\sigma+1}) =(2m+\delta)\sigma$.
	Therefore, the equations \eqref{rule.sel} turns into \eqref{parid.sel}, since
	$$
	\msprob[\sigma]{v_{\sigma+1}^{\smallsup{1}}=v}=
		\msprob[\sigma]{\text{choose vertex }v}
		=\frac{(d_\sigma(v)+\delta)A_{\sigma,n}(x_v)}{M^\smallsup0_{\sigma,n}(x_{\sigma+1})}
		=\frac{d_\sigma(v)+\delta}{(2m+\delta)\sigma}.
	$$
	Furthermore, note that for these parameters there are no self-loops, since
	$$
	\msprob[\sigma]{v_{\sigma+1}^{\smallsup{1}}=\sigma+1}=1-\frac{T^\smallsup0_{\sigma,n}(x_{\sigma+1})}{M^\smallsup0_{\sigma,n}(x_{\sigma+1})}=1-\frac{M^\smallsup0_{\sigma,n}(x_{\sigma+1})}{M^\smallsup0_{\sigma,n}(x_{\sigma+1})}=0.
	$$

	For the PARID model, we know that the power-law exponent is $3+\delta/m$, thus we expect that the power-law exponent in our model is $3+\delta/m$ if $\alpha \leq 2$ and $F_n=F_n^\smallsup0$. For $\alpha>2$, $\delta=0$ and $F_n$ satisfying some mild condition, see \eqref{F:condition}, we know from \cite{flax07} that the power-law exponent is $1+\alpha$, which is independent of $F_n$.
	
	We will show in this paper that the power-law exponent is given by $1+\alpha(1+\delta/2m)$, which generalizes the two mentioned papers \cite{flax04, flax07}.
	More precisely, let $N_k(\sigma)$ denote the number of vertices of degree $k$ in $G_\sigma$ and let $\bar N_k(\sigma)$ be its expectation. We will show that:	
	\begin{thm}[Behavior of the degree sequence] \label{theorem:1}
		Suppose that $\alpha>2$, $\delta>-m=m(n)$ and in addition that for $n\rightarrow \infty$,
		\eqn{\label{F:condition}
		\int_{x=0}^\pi F_n(x)^2 \sin x \,\mathrm{d}x = \mO{n^{\theta}I_n^2},
		}
		where $\theta<1$ is a constant. Then there exists a constant $\gamma_1>0$ such that for all $k=k(n) \geq m$,
		\eqn{\label{thm:claim1}
		\bar N_k(n)=n e^{\phi_k(m,\alpha,\delta)}\br{\frac{m}{k}}^{1+\alpha(1+\delta/2m)} +{\cal O}(n^{1-\gamma_1}),
		}
		where $\phi_k(m,\alpha,\delta)$ tends to a constant $\phi_\infty(m,\alpha,\delta)$ as $k\rightarrow \infty$.
		
		Furthermore, for each $\epsilon>0$ and $n$ sufficiently large, the random variables $N_k(n)$ satisfy the following concentration inequality
		\eqn{\label{thm:claim2}
		\mprob{|N_k(n)-\bar N_k(n)| \geq I_n^2n^{\max\bra{1/2,2/\alpha}+\epsilon}} \leq e^{-n^\epsilon}.
		}	
	\end{thm}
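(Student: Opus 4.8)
\quad
The plan is to (i) write down an exact one-step recursion for $\bar N_k(\sigma)$, (ii) use \eqref{F:condition} to replace the random geometric normalisation by its deterministic value and so reduce the recursion to a geometry-free, \emph{PARID}-type recursion, (iii) solve that recursion and transfer the solution to $\bar N_k(n)$, and (iv) deduce \eqref{thm:claim2} from a martingale argument. For step~(i), fix $k\ge m$ and condition on $G_\sigma$ and on $x_{\sigma+1}$; writing $\widetilde N_j=\sum_{v:\,d_\sigma(v)=j}A_{\sigma,n}(x_v)$ for the $F_n$-weighted number of degree-$j$ vertices, the attachment rules \eqref{rule.sel} give
\[
\expec\brh{N_k(\sigma+1)-N_k(\sigma)\mid G_\sigma,x_{\sigma+1}}
=\frac{m\big[(k-1+\delta)\,\widetilde N_{k-1}-(k+\delta)\,\widetilde N_{k}\big]}{M_{\sigma,n}(x_{\sigma+1})}
+\rho_k+E_\sigma ,
\]
where $\rho_k$ is the probability that vertex $\sigma+1$ has degree $k$ in $G_{\sigma+1}$ (determined by its number of self-loops) and $E_\sigma$ collects the lower-order event that two of the $m$ new edges hit the same old vertex. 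The decisive point is that $\expec[A_{\sigma,n}(x_v)\mid G_\sigma]=I_n$ for $v\le\sigma$ by \eqref{calcIf}, so $\expec[\widetilde N_j\mid G_\sigma]=I_n N_j(\sigma)$; hence \emph{if} $M_{\sigma,n}(x_{\sigma+1})$ may be replaced by the deterministic $\alpha\Theta I_n\sigma$, the factors $I_n$ cancel and, on taking expectations, one reaches the clean recursion
\[
\bar N_k(\sigma+1)=\bar N_k(\sigma)+\frac{a}{\sigma}\Big[(k-1+\delta)\bar N_{k-1}(\sigma)-(k+\delta)\bar N_k(\sigma)\Big]+\bar\rho_k(\sigma)+\bar E_\sigma,\qquad a:=\frac{m}{\alpha\Theta}=\frac{1}{\alpha(1+\delta/2m)} .
\]

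\emph{The geometric reduction (the main obstacle).}
Since $\alpha>2$ and $\expec[T_{\sigma,n}(x_{\sigma+1})\mid G_\sigma]=2\Theta I_n\sigma$ by \eqref{calcETn}, \eqref{eq:def:theta}, on the event $\Ecal_\sigma:=\{T_{\sigma,n}(x_{\sigma+1})\le\alpha\Theta I_n\sigma\}$ we have $M_{\sigma,n}(x_{\sigma+1})=\alpha\Theta I_n\sigma$ exactly, which is all the previous step needs. To control $\tprob(\Ecal_\sigma^{c}\mid G_\sigma)$ I bound the conditional variance of $T_{\sigma,n}(x_{\sigma+1})$ from \eqref{eq:Tt}: expanding the sum, using independence of the vertex positions, and invoking \eqref{F:condition} for the diagonal terms gives $\mathrm{Var}(T_{\sigma,n}(x_{\sigma+1})\mid G_\sigma)={\cal O}\big(n^{\theta}I_n^{2}\sum_v(d_\sigma(v)+\delta)^{2}\big)$, whence by Chebyshev and $\sum_v(d_\sigma(v)+\delta)^2\le\Delta_\sigma(2m+\delta)\sigma$ with $\Delta_\sigma:=\max_v d_\sigma(v)$,
\[
\tprob(\Ecal_\sigma^{c}\mid G_\sigma)={\cal O}\big(n^{\theta}\Delta_\sigma/\sigma\big),\qquad
\expec\brh{T_{\sigma,n}(x_{\sigma+1})\,\mathbf{1}_{\Ecal_\sigma^{c}}\mid G_\sigma}={\cal O}\big(n^{\theta}I_n\Delta_\sigma\big).
\]
A bootstrap (a crude polynomial bound on $\Delta_\sigma$, then self-improvement via the recursion) gives $\Delta_\sigma\le\sigma^{1/(\alpha(1+\delta/2m))+o(1)}$ with probability $\ge1-e^{-n^{\epsilon}}$, which is $o(\sigma)$ because $\alpha(1+\delta/2m)>1$. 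Feeding these estimates into the recursion, summing the discrepancy between the true and the clean recursion over $\sigma$, and bounding the first few steps trivially by $N_k(\sigma)\le\sigma$, the total error is absorbed into ${\cal O}(n^{1-\gamma_1})$ for a suitable $\gamma_1=\gamma_1(\alpha,\delta,m,\theta)>0$. This is where the real work sits: the ``clean'' recursion only holds once the geometric normalisation has been pinned to its deterministic value, and bounding the probability of the complement forces control of $\sum_v(d_\sigma(v)+\delta)^2$ (equivalently of $\Delta_\sigma$), which is itself part of the output --- the circularity must be broken by the bootstrap, and one must verify that under only the second-moment hypothesis \eqref{F:condition} the accumulated error stays ${\cal O}(n^{1-\gamma_1})$.

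\emph{Solving the clean recursion and transfer.}
The clean recursion is solved by the stationary ansatz $\bar N_k(\sigma)\approx\sigma p_k$, with $(p_k)_{k\ge m}$ the unique solution of $\big(1+a(k+\delta)\big)p_k=a(k-1+\delta)p_{k-1}\mathbf{1}_{\{k>m\}}+\bar\rho_k$, where $\bar\rho_k$ is the limiting degree law of a new vertex, an $m$-shifted $\mathrm{Bin}(m,1-2/\alpha)$ supported on $\{m,\dots,2m\}$ (so $\bar\rho_k=0$ for $k>2m$). For $k>2m$ this telescopes to $p_k=p_{2m}\,\Gamma(k+\delta)\Gamma(2m+1+\delta+1/a)/\big(\Gamma(2m+\delta)\Gamma(k+1+\delta+1/a)\big)$, and Stirling's formula gives $p_k=e^{\phi_k}(m/k)^{1+1/a}$ with $1+1/a=1+\alpha(1+\delta/2m)$ and $\phi_k\to\phi_\infty$ as $k\to\infty$; for $m\le k\le2m$ the same form holds by direct evaluation of the boundary values. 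Finally, with $\varepsilon_k(\sigma):=\bar N_k(\sigma)-\sigma p_k$ the clean recursion yields $\varepsilon_k(\sigma+1)=\big(1-\tfrac{a(k+\delta)}{\sigma}\big)\varepsilon_k(\sigma)+\tfrac{a(k-1+\delta)}{\sigma}\varepsilon_{k-1}(\sigma)+\eta_\sigma$ with $\sum_\sigma|\eta_\sigma|={\cal O}(n^{1-\gamma_1})$ from the previous step; a Gr\"onwall/induction argument in $k$ (using $\varepsilon_{m-1}\equiv0$, and for large $k$ the a priori bounds $\bar N_k(\sigma)\le\bar N_{\ge k}(\sigma)$ and $\sigma p_k=o(\sigma)$) propagates this bound uniformly in $k\ge m$, which is \eqref{thm:claim1}.

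\emph{Concentration.}
Let $\Fcal_i$ be generated by $x_1,\dots,x_i$ together with the attachment choices of the first $i$ steps, and put $Z_i=\expec[N_k(n)\mid\Fcal_i]$, a Doob martingale with $Z_0=\bar N_k(n)$ and $Z_n=N_k(n)$. Resampling the data of a single step changes $N_k(n)$ by a bounded amount, so $|Z_i-Z_{i-1}|\le C$, and Azuma--Hoeffding gives deviations of order $n^{1/2+\epsilon}$ with probability $\le e^{-n^{\epsilon}}$. To reach the stated bound $I_n^{2}n^{\max\{1/2,2/\alpha\}+\epsilon}$ in \eqref{thm:claim2} one intersects with the good events $\Ecal_\sigma$ above and with the event that no vertex born before time $\sigma$ exceeds its typical maximal degree (whose complement is sufficiently unlikely, again by the bootstrap); the contribution of these rare events, estimated through the second-moment bound with its $n^{\theta}I_n^{2}$ factor together with the maximal-degree bound, is what produces the $n^{2/\alpha}$-scale and the $I_n^{2}$ prefactor. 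The remaining pieces are the by-now-standard analysis of preferential-attachment degree recursions.
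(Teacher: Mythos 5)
Your overall architecture (one-step recursion, pinning $M_{\sigma,n}(x_{\sigma+1})$ to $\alpha\Theta I_n\sigma$, solving the resulting PARID-type recursion, Azuma for concentration) matches the paper, and your solution of the clean recursion is essentially the paper's Section 2. But the two steps you yourself flag as ``where the real work sits'' contain genuine gaps, and they are exactly the points where the paper deploys its coupling machinery.

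First, the control of $T_{\sigma,n}(x_{\sigma+1})$. Conditionally on $G_\sigma$ the positions $x_1,\dots,x_\sigma$ are \emph{fixed}; the only randomness left in $T_{\sigma,n}(x_{\sigma+1})=\sum_v(d_\sigma(v)+\delta)F_n(|x_v-x_{\sigma+1}|)$ is the single point $x_{\sigma+1}$, so there is no ``independence of the vertex positions'' to expand the variance with, and the bound $\mathrm{Var}(T_{\sigma,n}\mid G_\sigma)={\cal O}\big(n^\theta I_n^2\sum_v(d_\sigma(v)+\delta)^2\big)$ does not follow: the cross terms $\int_S F_n(|x_v-u|)F_n(|x_w-u|)\,\mathrm{d}u$ do not decouple, and treating them by Cauchy--Schwarz only yields $\mathrm{Var}={\cal O}(n^\theta I_n^2\sigma^2)$, which Chebyshev turns into the useless bound ${\cal O}(n^\theta)$. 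If instead you average over the positions as well, they are not independent of the degrees (the attachment history depends on them), which is precisely the obstruction. The paper's Lemma \ref{conc.W} gets around this by applying Azuma--Hoeffding to the Doob martingale $X_\tau=\expec[T_{\sigma,n}(U)\mid G_\tau]$ in the \emph{graph} filtration, and the martingale increments are bounded not by a variance computation but by a coupling of two graph processes that differ only in the step-$\tau$ action, with the expected number of mismatched edge choices shown to be ${\cal O}\big(mI_n(\sigma/\tau)^{2/\alpha}\log\sigma\big)$ (Lemma \ref{lem:miscouplings}); this is the technical core of the paper and is absent from your argument. It also removes any need for your maximal-degree bootstrap: the paper only ever needs the \emph{expected} degree bound $\expec[d_\sigma(v)+\delta]\le mC(\sigma/v)^{m/\alpha\Theta}$, proved by a one-line induction, whereas your high-probability bound on $\Delta_\sigma=\max_v d_\sigma(v)$ is asserted rather than proved and would itself require the concentration you are trying to establish.

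Second, the concentration \eqref{thm:claim2}. The bounded-differences claim $|Z_i-Z_{i-1}|\le C$ is false: resampling the position and attachment choices of step $i$ perturbs the degrees, hence all subsequent attachment probabilities, so the effect propagates through the rest of the process and can change $N_k(n)$ by much more than a constant. The correct martingale increment is (twice) the expected number of mismatches between the original and resampled processes, i.e.\ $e_\tau=2\hat{\expec}[\Delta^\tau_n]={\cal O}\big(m(n/\tau)^{2/\alpha}\log n\big)$; summing $e_\tau^2$ gives ${\cal O}\big(m^2I_n^2(n^{4/\alpha}+n\log n)\big)$, and \emph{that} is where the $n^{\max\{1/2,2/\alpha\}}$ scale and the $I_n^2$ prefactor in \eqref{thm:claim2} come from --- not from intersecting with rare events controlled by second moments. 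Without the coupling bound on mismatches, neither Lemma \ref{conc.W} nor \eqref{thm:claim2} is reachable by the route you describe.
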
	
			\begin{rem}
			Note that the power-law exponent in \eqref{thm:claim1} does not depend on the choice of the function $F_n$. We will see in the proof, that $F_n$ manifests itself only in the error terms.
			
			All the given canonical functions in Remark \ref{canonical.functions}	do satisfy the condition given by \eqref{F:condition}: it should be evident that for $F_n^\smallsup0$ the constants $I_n$ and $\theta$ are given by $I_n^\smallsup0=1$ and $\theta^\smallsup0=0$, respectively. Furthermore, in  \cite{flax07} it is shown that one can take $I_n^\smallsup1\sim r_n^2/4$, $I_n^\smallsup2 =\mO{1}$ if $\beta \in (0,2)$, and  $I_n^\smallsup2\sim \frac{n^{\delta(\beta-2)}}{2(\beta-2)}$ if $\beta>2$, and, hence we can take  $\theta^\smallsup1=0$, $\theta^\smallsup2=0$ and $\theta^\smallsup2=2\psi$, respectively.
	\end{rem}

Before we consider the connectivity and diameter of $G_n$, we place some additional restrictions on the function $F_n$. 
These restrictions are necessary to end up with a graph which is with high probability connected. Keep in mind the function $F^\smallsup1_n(u)=\indic{|u|\leq r_n}$, then it should be clear that $r_n$ should not decrease too fast, otherwise we end up with a disconnected graph.
	
	Let $\rho_n=\rho(\mu,n)$ be such that
	$$
	\mu I_n=\frac12\int_{x=0}^{\rho_n} F_n(x)\sin x\,\mathrm{d}x,
	$$
	for some $\mu \in(0,1]$.
	
	We will call $F_n$ \emph{smooth} (for some value of $\mu$) if
	\begin{itemize} \label{assumptions}
		\item[]{\bf (S1)} $F_n$ is monotone non-increasing;
		\item[]{\bf (S2)} $n\rho_n^2 \geq L \log n$, for some sufficiently large $L$;
		\item[]{\bf (S3)} $\rho_n^2F_n(2\rho_n) \geq c_3 I_n$, for some constant $c_3$ which is bounded from below.
	\end{itemize}
	
	Before stating the theorem, we will give an intuitive meaning of $\rho_n$.
	To that end, consider the function $F^\smallsup1_n(u)=\indic{u<r_n}$ and  use the fact that if $r_n =\mO{n^{-1/2-\varepsilon}}$ for some $\varepsilon>0$, then the limiting graph is not connected, see \cite{Penrose}.
	It should be intuitively clear that in the limit, each newly added vertex $x_n$ should connect to at least one other vertex. Thus, there should be at least one vertex within distance $r_n$ of $x_n$.
	 At time $n$ there are $n-1$ vertices and the probability  that at least one of these vertices is at distance at most $r_n$ of vertex $x_{n}$, denoted by $p_{\mathrm c}(n,r_n)$, is at most $C(n-1)r_n^2$ for some constant $C$. On the other hand, we see that if $r_n =\mO{n^{-1/2-\varepsilon}}$ then $p_{\mathrm c}(n,r_n)=\mO{n^{-2\varepsilon}}$ tends zero for large $n$, and, as a consequence, in the limit the graph is not connected. 
	 If, as is our assumption, $r_n>n^{\varepsilon-1/2}$, then $p_{\mathrm c}(n, r_n)\rightarrow \infty$. 
	 
	 Interpret $\rho_n$ for general $F_n$ as the radius. 	 
	 The condition that $p_{\mathrm c}(n, r_n) \rightarrow \infty$ is replaced by
	 $n\rho_n^2 > L\log n$. Then, intuitively,  condition {\bf S2} implies that for general $F_n$ the value $p_{\mathrm c}(n,\rho_n)$, does not tend to zero and implies that the limiting graph is connected. 		 
	 The conditions {\bf S1} and {\bf S3} are technicalities, combined they ensure that the `area' due to the radius $\rho_n$ is sufficiently large: condition {\bf S1} states that $F_n$ is monotone non-increasing and combined with {\bf S3} one can show that the `area' within radius $2\rho_n$ is $(2\rho_n)^2F_n(2\rho_n)$, which is at least $4c_3 I_n$.

	\begin{thm} \label{thm:diameter}
	If $\alpha \geq 2$ and $F_n$ is smooth, $m \geq K\log n$, and $K$ is sufficiently large constant,
		then with high probability
		\begin{itemize}
			\item $G_n$ is connected;
			\item $G_n$ has diameter $\mO{\log n/\rho_n}$.
		\end{itemize}
	\end{thm}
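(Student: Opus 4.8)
The plan is to establish connectivity and the diameter bound simultaneously by a two-scale geometric argument, combining a covering of the sphere $S$ by small caps with the preferential-attachment dynamics. First I would fix a mesh of geodesic caps of radius $\Theta(\rho_n)$ covering $S$, so that the number of caps is $\Theta(1/\rho_n^2)$ and any two points in a common cap are within distance $\rho_n$. The key deterministic observation is that, because $F_n$ is smooth, for any $u \in S$ the quantity $\frac12\int_0^{\rho_n} F_n(x)\sin x\,dx = \mu I_n$ measures the ``mass'' of the $F_n$-weighted region within radius $\rho_n$ of $u$, so by Lemma~\ref{m_dk(u)} and {\bf (S3)} the conditional probability that a newly arriving vertex $x_{\sigma+1}$ lands in a given cap \emph{and} attaches at least one of its $m$ edges to a prescribed vertex in that cap is bounded below by a constant times $\rho_n^2 \cdot (d_\sigma(v)+\delta) F_n(2\rho_n)/M_{\sigma,n}(x_{\sigma+1}) \gtrsim \rho_n^2/\sigma$ (using $M_{\sigma,n}(u) \le T_{\sigma,n}(u) + \alpha\Theta I_n\sigma = \mO{I_n\sigma}$ whp and $\rho_n^2 F_n(2\rho_n)\gtrsim I_n$ from {\bf (S3)}). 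Here is where $m \ge K\log n$ enters: with $m$ independent edge-endpoint trials per arrival, the probability that \emph{none} of the $m$ edges of $x_{\sigma+1}$ lands on a vertex within distance $2\rho_n$, given that $x_{\sigma+1}$ falls in a cap already containing a vertex, is at most $(1-c\rho_n^2/\rho_n^2)^{m} = \mO{n^{-K'}}$ — wait, more carefully: conditioned on the cap being nonempty, a single edge connects within $2\rho_n$ with probability $\gtrsim$ constant, so all $m$ miss with probability $e^{-\Omega(m)} = \mO{n^{-\Omega(K)}}$. A union bound over all $n$ arrivals then shows that whp every vertex arriving into a nonempty cap is linked (ignoring orientation) to an already-present vertex of that cap.

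Second I would handle the ``cap-filling'' step: show that whp every cap becomes nonempty, and in fact nonempty early. Since each $x_\sigma$ is uniform on $S$ and a cap has area $\Theta(\rho_n^2)$, after $t$ arrivals a fixed cap is still empty with probability $(1-c\rho_n^2)^t \le e^{-c\rho_n^2 t}$; taking $t = C\log n/\rho_n^2$ and using $n\rho_n^2 \ge L\log n$ from {\bf (S2)} (so that $t \le (C/L)\, n \le n$ for $L$ large), a union bound over the $\Theta(1/\rho_n^2) \le n$ caps shows that whp all caps are nonempty by time $t_0 := C\log n/\rho_n^2$. Combined with the first step, whp every vertex arriving after $t_0$ into any cap attaches to a vertex already in that cap, and the $\le t_0$ ``early'' vertices, being finitely many on the scale of $n$, can each be handled directly: each early vertex either attaches to an existing vertex or (via a self-loop) is its own component root, and one shows whp each early vertex is eventually absorbed because some later vertex lands near it and connects — again using $n\rho_n^2 \ge L\log n$.

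Third, for connectivity and the diameter bound I would build an auxiliary graph $H$ on the set of caps, joining two caps if they are geometrically adjacent in the mesh; $H$ has diameter $\mO{1/\rho_n}$ since it is essentially a triangulation of the sphere with $\Theta(1/\rho_n^2)$ faces of bounded degree. The goal is to show that in $G_n$ any two vertices in adjacent caps are within bounded graph-distance whp, and any two vertices in the \emph{same} cap are within $\mO{\log n}$ graph-distance. The first of these follows because adjacent caps share vertices at distance $\mO{\rho_n}$, and by the first step those are linked through a common later arrival; the within-cap bound of $\mO{\log n}$ comes from the fact that a cap contains $\mO{n\rho_n^2}$ vertices and the attachment dynamics restricted to a cap behaves like a preferential-attachment tree, whose diameter is $\mO{\log(\text{size})} = \mO{\log n}$ — but actually one only needs each vertex to reach a fixed ``hub'' vertex of its cap, which by a standard first-moment/martingale argument on the attachment rule happens within $\mO{\log n}$ steps whp. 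Concatenating a within-cap path ($\mO{\log n}$), a walk across $\mO{1/\rho_n}$ caps (each crossing costing $\mO{1}$), and a final within-cap path gives total diameter $\mO{\log n} + \mO{1/\rho_n} + \mO{\log n}$; since smoothness forces $\rho_n \to 0$ we have $1/\rho_n \ge$ const, and the claimed bound $\mO{\log n/\rho_n}$ absorbs all three terms.

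The main obstacle I expect is making the within-cap connectivity precise: one must control the denominator $M_{\sigma,n}(x_{\sigma+1})$ uniformly (it depends on the random configuration through $T_{\sigma,n}$), and one must argue that the $F_n$-weighting does not conspire to starve a particular cap of attachment probability. This is exactly where {\bf (S1)} and {\bf (S3)} do the work — monotonicity of $F_n$ lets one lower-bound $F_n$ on a cap of radius $2\rho_n$ by $F_n(2\rho_n)$, and {\bf (S3)} converts this into the bound $\rho_n^2 F_n(2\rho_n) \gtrsim I_n$ needed to compare the local attachment mass with the global normalizer $M_{\sigma,n} = \mO{I_n\sigma}$. The concentration of $T_{\sigma,n}(x_{\sigma+1})$ around its mean $I_n(2m+\delta)\sigma$ (a sum of bounded-by-degree terms weighted by $F_n$) would be obtained by a bounded-differences / Azuma argument over the history, analogous to the concentration statement \eqref{thm:claim2} in Theorem~\ref{theorem:1}, and it is this step that forces $m \ge K\log n$ with $K$ large, since the exceptional probabilities must beat the $\Theta(1/\rho_n^2) \le n$ union bounds over caps and the $n$ union bounds over arrivals.
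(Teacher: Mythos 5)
Your architecture---caps of radius $\Theta(\rho_n)$, a local attachment estimate driven by \textbf{(S1)} and \textbf{(S3)}, and $m\ge K\log n$ used to beat union bounds over the $n$ arrivals and the ${\cal O}(1/\rho_n^2)$ caps---is essentially the geometric argument of \cite[Theorem 2]{flax07}, which is all the paper itself invokes: its entire proof is a reduction to that theorem, the only new content being the check that the key inequality $\mprob{v^{\smallsup{1}}_{t_s}=v\mid G_{t_s-1}}\ge 2c_1c_3^*/(\alpha s)$ survives the introduction of $\delta$ (with $c_3^*=c_3/2$, using $(m+\delta)/\Theta>1/2$). Your estimate $\rho_n^2(d_\sigma(v)+\delta)F_n(2\rho_n)/M_{\sigma,n}\gtrsim 1/\sigma$ is exactly that inequality, so you have correctly isolated the one $\delta$-dependent step. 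However, your reconstruction of the surrounding argument has a genuine gap in the first step: it is \emph{not} true that ``conditioned on the cap being nonempty, a single edge connects within $2\rho_n$ with probability $\gtrsim$ constant.'' If the cap contains a single vertex $v$, the probability that one edge hits $v$ is $\asymp (m+\delta)F_n(2\rho_n)/(\alpha\Theta I_n\sigma)\asymp 1/(\rho_n^2\sigma)\to 0$. The constant lower bound requires $\Omega(\sigma\rho_n^2)$ vertices within distance $2\rho_n$ of $x_{\sigma+1}$, over which the attachment weights sum to $\gtrsim \sigma\rho_n^2\,(m+\delta)F_n(2\rho_n)/(I_n\sigma)\gtrsim c_3$ by \textbf{(S3)}; this density statement holds whp by \textbf{(S2)} and a Chernoff bound, but it is precisely what makes the step work and must be proved and union-bounded, not replaced by mere nonemptiness.

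The second gap is the claim that crossing between adjacent caps costs ${\cal O}(1)$ graph-distance. Two prescribed vertices $u,w$ at geometric distance ${\cal O}(\rho_n)$ are not linked by a ``common later arrival'': an arrival at time $\sigma$ attaches some edge to a \emph{specific} nearby vertex with probability only $\asymp m/(\rho_n^2\sigma)$, so the expected number of arrivals hitting both $u$ and $w$ need not be $\Omega(1)$ (and is zero if one of them arrives near time $n$). The correct accounting---and the reason the theorem's bound is the product $\log n/\rho_n$ rather than the sum $\log n+1/\rho_n$ you end up with---is that each of the ${\cal O}(1/\rho_n)$ cap-steps costs ${\cal O}(\log n)$, via the chain argument of \cite{flax07} in which whp every vertex has an edge to a substantially earlier vertex within distance $2\rho_n$; your within-cap ``${\cal O}(\log n)$ to a hub'' assertion is the statement that actually carries the proof and is left unproved. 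Similarly, absorbing the $t_0={\cal O}(\log n/\rho_n^2)$ early vertices needs the quantitative version of step one (the expected number of later arrivals attaching to a fixed early vertex is $\asymp m\ge K\log n$), not just ``some later vertex lands near it.''
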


	\begin{rem}
		All the canonical functions are smooth. 
		It should be evident that one can take for $F^\smallsup0_n$: $\mu^\smallsup0=1$, $\rho_n^\smallsup0=1$ and $c_3^\smallsup0=1$ for $F^\smallsup0_n$. 	
		For $F^\smallsup1_n(u)$ one can take for example $\mu^\smallsup1\sim1/4$ and $\rho_n^\smallsup1=r_n/2$ and $c_3^\smallsup1 \sim 1$. 
				Finally, $F_n^\smallsup2$  is also smooth, we refer to \cite{flax07} for the precise values of $\rho_n^\smallsup2,\mu^\smallsup2$ and $c_3^\smallsup2$. 
	\end{rem}

	We end with a sharper result on the diameter, however we, also, need  stronger restrictions on the function $F_n$. We will call $F_n$ \emph{tame} if there exists strictly positive constants $C_1$ and $C_2$ such that
	\begin{itemize}
		\item[]{\bf (T1)} $F_n(x)\geq C_1$ for $0 \leq x \leq \pi$;
		\item[]{\bf (T2)} $I_n \leq C_2$.
	\end{itemize}

	\begin{thm} \label{thm:diameter2}
	If $\alpha \geq 2$, $\delta > -m$ and $F_n$ is tame and $m \geq K\log n$, and $K$ sufficiently large,
		then with high probability
		\begin{itemize}
			\item $G_n$ is connected;
			\item $G_n$ has diameter $\mO{\log_m n}$.
		\end{itemize}
	\end{thm}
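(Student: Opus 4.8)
The role of tameness is to reduce the model, at leading order, to a preferential attachment graph with $m\ge K\log n$ edges per step, for which a diameter of order $\log_m n$ is the expected behaviour. Indeed (T1)--(T2) force $I_n=\tfrac12\int_0^\pi F_n(x)\sin x\,\mathrm dx\in[C_1,C_2]$, so in \eqref{def:M}--\eqref{rule.sel} the denominator $M_{\sigma,n}(x_{\sigma+1})=\max\{T_{\sigma,n}(x_{\sigma+1}),\alpha\Theta I_n\sigma\}$ is, on a high-probability event, of order $I_nm\sigma\asymp m\sigma$, while the numerator for a target $v$ is at least $C_1(d_\sigma(v)+\delta)$. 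Hence, on that event, a given one of the $m$ edges of vertex $\sigma+1$ lands on a fixed earlier vertex $v$ with probability $\asymp(d_\sigma(v)+\delta)/(m\sigma)$, uniformly in the positions: the geometry survives only in the multiplicative constant $C_1/C_2$. Concretely, I would first fix an event $\mathcal E$ of probability $1-o(1)$ on which: (i) every vertex has exactly $m$ out-edges, and every vertex $v\ge v_0:=\Theta(m^2)$ has at least $m/2$ distinct out-neighbours (a birthday estimate, using $m\ge K\log n$ and that no atom of the attachment law is close to $1$); (ii) degrees are concentrated, $d_\sigma(v)=\mO{m(\sigma/v)^{1/2}\log n}$ uniformly over $v\le\sigma\le n$; (iii) $M_{\sigma,n}(x_{\sigma+1})=\mO{I_nm\sigma}$ uniformly over $\sigma$ — equivalently $T_{\sigma,n}$ at the freshly sampled point never exceeds a constant multiple of its mean $I_n(2m+\delta)\sigma$. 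Item (iii) is where tameness is used beyond the bound on $I_n$: although $F_n$ may be unbounded, the $F_n$-mass a uniform point sees within distance $\mO{(\log n/\sigma)^{1/2}}$ of itself is $\mO{I_n}$, so a Bernstein-type estimate for the sum $T_{\sigma,n}(x_{\sigma+1})$, together with (ii) and a union bound over $\sigma$, gives (iii) without a spurious logarithmic factor.

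\noindent\textbf{Connectivity.} From (i)--(iii), for every $v\ge2$ a given out-edge of $v$ falls in $\{1,\dots,\lceil v/2\rceil\}$ with probability at least a constant $c>0$, so all $m$ of them miss this set with probability at most $(1-c)^m\le n^{-3}$ for $K$ large. A union bound over $v$ shows that, whp, every vertex has an out-edge to an earlier vertex of index at most half its own; hence every vertex is joined to vertex~$1$ and $G_n$ is connected.

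\noindent\textbf{Diameter.} It suffices to prove that, whp, $d_{G_n}(v,1)=\mO{\log_m n}$ for all $v$, since then every pair is within $2\max_w d_{G_n}(w,1)$. Fix $v$ and grow the ball $B_t=B_t(v)$ by repeatedly adjoining out-neighbours. The plan is to show, on $\mathcal E$, that while $1\notin B_t$
\[
|B_{t+1}|\ \ge\ \min\Bigl\{\tfrac12\,m\,|B_t|,\ v\Bigr\},
\]
and, moreover, that once $|B_t|\ge 3v\log n/(c_1m)$ the whole prefix $\{1,\dots,\min B_t\}$ is absorbed into $B_{t+1}$, so $1\in B_{t+1}$; here $c_1>0$ is a constant with the property that, on $\mathcal E$, a given edge of vertex $\tau+1$ lands on any fixed $z<\tau$ with probability at least $c_1/\tau$ (this follows from (iii) and tameness). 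The growth estimate has two mechanisms: while $|B_t|$ is small, the $\ge m/2$ distinct out-neighbours of the most recently adjoined vertex already give the factor $m$; while $|B_t|$ is larger, the $\ge m|B_t|$ out-edges emanating from $B_t$ land in $\{1,\dots,\max B_t\}$, and because by (ii) no target absorbs more than $\mO{mn^{1/2}\log n}$ edges while the degree sequence has power-law exponent $1+\alpha(1+\delta/2m)>2$ (here $\alpha\ge2$, $\delta>-m$ enter) — so its mass sits in the bulk, not on the few hubs — these edges meet $\Omega(m|B_t|)$ distinct new vertices. Since $|B_1|\ge\min(m/2,v/4)$ and $|B_t|$ thereafter multiplies by at least $m/2$ until it passes the threshold $3v\log n/(c_1m)$, vertex~$1$ is reached after $t=\mO{\log_m n}$ steps. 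A union bound over $v\le n$ and over the $\mO{\log_m n}$ levels (each failure probability $\le n^{-3}$, using $m\ge K\log n$) then gives the diameter bound whp.

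\noindent\textbf{Main obstacle.} The crux is the geometric ball-growth step $|B_{t+1}|\ge\tfrac12m|B_t|$ — ruling out that the $m|B_t|$ out-edges emanating from $B_t$ collapse onto a small set of high-degree hubs. This needs the upper-tail control of the degree sequence from the proof of Theorem~\ref{theorem:1}, combined with a lower bound on the number of distinct targets of the flattened attachment measure, and it is precisely where $\alpha\ge2$, $\delta>-m$ and $m\ge K\log n$ (with $K$ large) are all used together. Everything else — the flattening, connectivity, the endgame in which a large ball swallows an entire prefix, and the bookkeeping of the union bounds — is routine once $\mathcal E$ is in hand.
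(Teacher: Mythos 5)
Your architecture is the right one, and it is essentially the argument the paper relies on: the paper's own proof of Theorem \ref{thm:diameter2} is a two-line reduction to the proof of Theorem 3 in \cite{flax07}, the sole observation being that $\delta$ enters only through the lower bound on the attachment probability. By (T1)--(T2) and \eqref{def:M}--\eqref{rule.sel}, the probability that a given edge of vertex $\sigma+1$ lands on a fixed $v\le\sigma$ is at least $(m+\delta)C_1/(\alpha\Theta C_2\sigma)$, so the constant $\lambda=C_1/C_2$ of \cite{flax07} is replaced by $\lambda=(C_1+\delta)/2C_2$ (using $(m+\delta)/\Theta\ge\tfrac12$) and the rest of that proof is used verbatim. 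Your ``flattening'' of the geometry, the connectivity argument (every vertex sends an edge into the first half of its predecessors, with failure probability $(1-c)^m\le n^{-3}$ once $m\ge K\log n$), and the $\log_m n$ ball-growth scheme all match that template.

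The genuine gap is the one you flag yourself, and it cannot be waved through: the inequality $|B_{t+1}|\ge\tfrac12 m|B_t|$ \emph{is} the theorem, and you offer only a heuristic (``its mass sits in the bulk, not on the few hubs'') in its place. Three specific things are missing. First, the anti-collision estimate: a second-moment/birthday bound controls collisions among the $m$ out-edges of a single late vertex, but the $m|B_t|$ out-edges of the whole ball can collide across different sources, and the bound you would get, roughly $m|B_t|\cdot D_{\max}/(m\min B_t)$, degrades precisely as the ball grows and absorbs early, high-degree vertices --- the regime where you still need one more doubling before the endgame threshold $3v\log n/(c_1m)$ is reached. Second, the conditioning: the out-edges of a vertex $w\in B_t$ are revealed by the exploration, and the event $\{w\in B_t\}$ is determined by attachment choices made at times after $w$, which are biased by $w$'s degree and hence, indirectly, by the very edges you then want to treat as fresh in the endgame computation; some union-bound or filtration argument is needed to restore the per-edge lower bound $c_1/\tau$ after conditioning. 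Third, your event $\mathcal E$(ii) asserts a uniform high-probability degree bound $d_\sigma(v)=\mO{m(\sigma/v)^{1/2}\log n}$, whereas the paper establishes only the expectation bound \eqref{claim1}; a whp version requires a separate (super)martingale argument. None of these is likely fatal --- they are exactly what \cite[proof of Theorem 3]{flax07} supplies --- but as written the proposal proves the easy reductions and leaves the core step unproved.
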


	\begin{rem}
		It should be evident that the function $F_n^\smallsup0$ is tame, since one can take $C_1=C_2=1$. If $\beta \in(0,2)$ then we also have that $F_n^\smallsup2$ is tame, since
		\begin{align*}
		F_n^\smallsup2(x) &\geq \pi^{-\beta}, \text{ for $0\leq x \leq \pi$,}&
		&\text{and}&
		I_n^\smallsup2&=\frac12\int_{x=0}^\pi x^{-\beta}\sin x \,\mathrm{d}x \leq \frac{\pi^{2-\beta}}{2(2-\beta)}.&
		\end{align*}
	\end{rem}

	\begin{rem}
	If we consider the configuration model (CM), see \namedref{sec:related}, \cite{finstub, infvar}  or the Poissonian random graph (PRG), see \cite{EVshort, norros3}, then the typical distance depends on the power-law exponent. If the power-law exponent is larger than $3$, then the typical distance is of $\mO{\log n}$, where $n$ is the number of vertices in the graph, and if the power-law exponent is between 2 and 3 then the typical distance is of $\mO{\log \log n}$.
	On before hand, it is not clear if this holds for the GPAF model. Theorem \ref{thm:diameter}, only states an upper bound on the diameter, independent of $\delta$.
	
	If $F_n=F_n^\smallsup0\equiv 1$ and $\delta \in(-m,0)$ then the authors of \cite{diaminpref} show that the diameter in the graph $G_n$ fluctuates around $\log\log n$. If $F_n(u)=F_n^\smallsup1(u)=\indic{|u| \leq r_n}$, then, intuitively, the diameter depends only on $r_n$, since $r_n$ determines the maximal length of an edge, and we  conjecture that the diameter is at least of order $\log n$.
	\end{rem}

		\subsection{Related work} \label{sec:related}
		In this section we consider random graph models,
which are related to the Geometric Preferential Attachment model with fitness (GPAF).

As mentioned earlier the model is related to the Albert-Bar\'abasi (BA) model.
In the BA-model the power-law exponent $\tau$ is
limited to the value $3$, which was proven by
Bollob\'as and Riordan.

Cooper and Frieze introduced in \cite{ccooafrie01}
a very general model preferential attachment model.
In this model it is both possible to introduce
new vertices at each time step or to introduce
new edges between old vertices.
Due to the weights with which edges of the new vertices are attached
to old vertices and the adding of
edges between old vertices, the power-law exponent $\tau$ can obtain any value $\tau>2$.

In \cite{PhysRevLett.85.4633} the authors overcome the restriction $\tau \geq 3$
in a different way, by choosing the endpoint of an edge proportional
to the in-degree of a vertex plus some initial attraction $A > 0$.
This is identical by choosing the endpoint of an edge proportional
to the degree of a vertex plus some amount $\delta=A-m > -m$,
as done in the PARID model (cf. \cite{parid?}).
The power-law exponent in \cite{PhysRevLett.85.4633} is given by $\tau=3+\delta/m$.
Note that for $\delta=0$ we obtain the BA model. The authors of \cite{parid?}
show more rigorously some of the results in \cite{PhysRevLett.85.4633}.

Both in \cite{ccooafrie01}
and in \cite{parid?} it is allowed to add  a random number of edges $W$,
with the introduction of a new  vertex.
In case the mean of $W$ is finite the power-law exponent is given by
$\tau=3+\delta/\mexpec{W}$. Hence, if $\tprob(W= m)=1$ for some integer $m\geq 1$
then we see that $\tau =2+\delta/m \geq 2$, since we can choose for
$\delta $ any value in $(-m, 0)$.

In \cite{flax04, flax07} the authors add geometry to the BA model,
which corresponds to the GPAF model, introduced above, with $\delta=0$.
Due to a technical difficulty the model has an additional parameter, called $\alpha > 2$.
As a consequence of this restriction they only obtain power-law exponents
greater than $3$, since the power-law exponent is given by $\tau=\alpha+2$.

By combining the GPA and PARID model, we obtain the GPAF model,
introduced in this paper.
Due to the additional parameter $\delta$,
it is in this model possible to obtain any power-law exponent $\tau$ bigger than $2$.

	\subsection{Overview of the paper}
		The remainder of this paper is divided into three sections. 		
		In \namedref{sec:recurrence} we will derive a recurrence relation for the expected number of vertices of a given degree. In \namedref{sec:Coupling} we will present a coupling between the graph process and an urn scheme, which will be used in \namedref{sec:proof.main} to show that the number of vertices with a given  degree is concentrated around its mean. 
		
		\section{Recurrence relation for the expected degree sequence} \label{sec:recurrence}
		In this section we will establish a recurrence relation for $\bar N_k(\sigma)=\mexpec{N_k(\sigma)}$, the expected number of vertices with degree $k$ at time $\sigma$, which is claim \eqref{thm:claim1} of Theorem \ref{theorem:1}. 
		From this recurrence relation, we will show that
		$$
		\bar N_k(\sigma) \sim \sigma p_k,
		$$
		where $p_k \sim k^{-(1+\alpha(1+\delta/2m))}$, as $k \rightarrow \infty$.
		The proof of claim \eqref{thm:claim1} depends on a lemma, which is crucial for the proof. This lemma states that for sufficiently large $n$ the value  $M_{\sigma,n}(x_{\sigma+1})$ is equal to $\alpha \Theta  I_n\sigma$, with high probability. This is a consequence of the fact that $T_{\sigma,n}(x_{\sigma+1})$ is concentrated around its mean $\mexpec{T_{\sigma,n}(x_{\sigma+1})}=2\Theta I_n \sigma < \alpha \Theta I_n \sigma$, see \eqref{calcETn} and \eqref{eq:def:theta}, which is the content of the next lemma.
			\begin{lem} \label{conc.W}
			If $\alpha>2$, $\delta>-m$, $\sigma=0,1,2,\ldots,n$, and $U$ is chosen randomly from $S$ then
			\eqn{\nonumber
			\mprob{\brc||{T_{\sigma,n}(U)-\mexpec{T_{\sigma,n}(U)}} > \Theta I_n \br{\sigma^{2/\alpha}+\sigma^{1/2}\log \sigma}\log n}=\mO{n^{-2}}.
			}
		\end{lem}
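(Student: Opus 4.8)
The plan is to prove the concentration of $T_{\sigma,n}(U)$ around its mean in two stages: first conditional on the graph $G_\sigma$ (so that the only randomness left is in $U$), and then combining with a deterministic bound on the degrees. Recall from \eqref{eq:Tt} that $T_{\sigma,n}(U)=\sum_{v=1}^\sigma (d_\sigma(v)+\delta)F_n(|x_v-U|)$ and from Lemma \ref{m_dk(u)} that $\mexpec{T_{\sigma,n}(U)\,|\,G_\sigma}=I_n(2m+\delta)\sigma = 2\Theta I_n\sigma$, independently of $G_\sigma$. Thus it suffices, conditionally on $G_\sigma$, to control the fluctuations of the sum of the \emph{independent} random variables $Y_v := (d_\sigma(v)+\delta)F_n(|x_v-U|)$ — independent because the positions $x_1,\ldots,x_\sigma$ are i.i.d.\ uniform on $S$ and $U$ is an independent uniform point, so conditionally on $G_\sigma$ and on $U$ the $x_v$ are still independent. (One has to be slightly careful: $G_\sigma$ depends on $x_1,\dots,x_\sigma$; the clean way is to condition on $U$ and on the \emph{degree sequence} $(d_\sigma(v))_v$ and exploit exchangeability, or simply to observe that $\mexpec{F_n(|x_v-U|)\mid G_\sigma}=I_n$ by \eqref{eq.shrtcut.0} and that the relevant variables are conditionally independent given the $x_v$.)

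The main tool will be a Bernstein/Bennett-type inequality for sums of independent, non-identically-distributed bounded random variables. To apply it I need (i) a uniform bound on each $Y_v$ and (ii) a bound on the conditional variance $\sum_v \mathrm{Var}(Y_v\mid G_\sigma)$. For (i): $d_\sigma(v)+\delta \leq 2m\sigma$ crudely, but the key point is to bound $F_n$; here is where hypothesis \eqref{F:condition} enters, since $\int_0^\pi F_n(x)^2\sin x\,dx = \mO{n^\theta I_n^2}$ with $\theta<1$ controls the $L^2$ mass of $F_n$. For (ii): $\mathrm{Var}(Y_v\mid G_\sigma) \leq (d_\sigma(v)+\delta)^2\, \mexpec{F_n(|x_v-U|)^2} = (d_\sigma(v)+\delta)^2 \cdot 2 I_n^{\smallsup{}}$-type quantity $= (d_\sigma(v)+\delta)^2 \cdot \mO{n^\theta I_n^2}$ by the rotation-invariance computation of Lemma \ref{m_dk(u)} applied to $F_n^2$ in place of $F_n$. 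Summing, and using $\sum_v (d_\sigma(v)+\delta) = (2m+\delta)\sigma$ together with $d_\sigma(v)+\delta \leq$ (max degree), one gets $\sum_v \mathrm{Var}(Y_v\mid G_\sigma) = \mO{n^\theta I_n^2 \cdot \sigma \cdot \Delta_\sigma}$ where $\Delta_\sigma$ is a bound on $\max_v (d_\sigma(v)+\delta)$. A trivial bound $\Delta_\sigma \leq 2m\sigma$ is too weak; one needs the standard fact (for this class of preferential attachment graphs) that with probability $1-\mO{n^{-2}}$ the maximum degree in $G_\sigma$ is $\mO{\sigma^{1/\alpha}\log\sigma}$ or so — this is what produces the $\sigma^{2/\alpha}$ term in the claimed bound. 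The two error contributions $\sigma^{2/\alpha}$ (from the variance / max-degree regime, dominant when $\alpha<2$, i.e.\ never here, but relevant for the balance) and $\sigma^{1/2}\log\sigma$ (from the square-root-of-variance Bernstein term with $\sum_v(d_\sigma(v)+\delta)^2 = \mO{\sigma\cdot\sigma^{1/\alpha}}$, giving $\sqrt{\cdot}\sim\sigma^{1/2+1/2\alpha}$, absorbed into $\sigma^{1/2}\log\sigma$ once $\alpha>2$... let me not commit to the exact exponent bookkeeping here) combine to give the stated $\Theta I_n(\sigma^{2/\alpha}+\sigma^{1/2}\log\sigma)\log n$ deviation with the extra $\log n$ factors providing the $e^{-c\log^2 n}$ decay needed to beat $n^{-2}$ after a union bound over $\sigma=0,\dots,n$ (and, if needed, over $U$ via a net argument, though conditioning on $U$ avoids this).

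So concretely the steps are: (1) condition on $G_\sigma$ and reduce to a sum of conditionally independent bounded variables with known mean $2\Theta I_n\sigma$; (2) invoke the known maximum-degree bound for $G_\sigma$, valid off an event of probability $\mO{n^{-2}}$, to get $\max_v d_\sigma(v) = \mO{\sigma^{1/\alpha}\log^{O(1)}\sigma}$; (3) compute/bound $\sum_v\mexpec{Y_v^2\mid G_\sigma}$ using the $L^2$-hypothesis \eqref{F:condition} and the rotation-invariance trick of Lemma \ref{m_dk(u)} applied to $F_n^2$; (4) apply a Bernstein inequality to get the conditional deviation bound with the two terms $\sigma^{2/\alpha}$ and $\sigma^{1/2}\log\sigma$ (times $\Theta I_n\log n$), with failure probability $\mO{n^{-3}}$ say; (5) remove the conditioning and union-bound over the $\mO{n}$ values of $\sigma$. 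I expect step (2) — securing the high-probability bound on the maximum degree of $G_\sigma$, which is where the $\sigma^{2/\alpha}$ exponent with the "$2$" and the "$\alpha$" genuinely comes from — to be the main obstacle, both because it requires a separate martingale or direct first-moment argument on the degree evolution under the attachment rule \eqref{rule.sel} (where $M_{\sigma,n}$ itself depends on the random configuration, forcing one to either bootstrap on the very event this lemma is used to establish, or to use the crude lower bound $M_{\sigma,n}\geq \alpha\Theta I_n\sigma$ which is clean and $\alpha>2$-safe), and because getting the logarithmic powers and the union bound to close at level $n^{-2}$ requires care; steps (1), (3), (4), (5) are then routine.
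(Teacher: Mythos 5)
Your overall strategy --- freeze the graph, view $T_{\sigma,n}(U)=\sum_{v=1}^\sigma(d_\sigma(v)+\delta)F_n(|x_v-U|)$ as a sum of conditionally independent bounded variables, and apply a Bernstein inequality --- does not go through, and the obstruction is exactly the point you flag in passing and then set aside. Conditionally on $G_\sigma$ there is no independence left to exploit: the positions $x_1,\dots,x_\sigma$ are part of the data of $G_\sigma$, so the only remaining randomness is $U$, and $T_{\sigma,n}(U)$ is then a function of a single random point. Conditioning instead on $U$ and on the degree sequence does not restore independence either: the degrees are produced by the geometric attachment rule \eqref{rule.sel} and are therefore entangled with \emph{all} of the positions jointly, so given $(d_\sigma(v))_v$ the positions are neither independent nor uniform, and no exchangeability argument repairs this (the position of an early vertex influences where every later vertex attaches). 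This coupling between degrees and positions is the entire difficulty of the lemma, and your decomposition removes it by fiat. Your step (2) is also problematic on its own terms: you need a \emph{with-high-probability} bound on the maximum degree at failure probability $\mO{n^{-2}}$ uniformly in $\sigma$, which is a substantial separate result, not an off-the-shelf fact for this model.

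The paper instead runs a Doob martingale $X_\tau=\mexpec{T_{\sigma,n}(U)\,|\,G_\tau}$ in the filtration of the graph \emph{process} and bounds the increment $e_\tau$ by coupling two copies of the process that differ only in the action taken at time $\tau$ (the urn construction of Section \ref{sec:Coupling}); Lemma \ref{lem:miscouplings} shows the perturbation propagates at rate $(\sigma/\tau)^{2/\alpha}$, whence $\sum_{\tau\le\sigma} e_\tau^2=\mO{m^2I_n^2(\sigma^{4/\alpha}+\sigma\log\sigma)}$ and Azuma--Hoeffding (Lemma \ref{Hoeffdingvaria}) gives the claim. In particular the exponent $2/\alpha$ does not come from a high-probability maximum-degree bound, as you conjecture, but from the growth rate $a=m/(\alpha\Theta)<2/\alpha$ of the expected number of mismatches between the coupled processes (equivalently, of the \emph{expected} degree of a fixed vertex, claim \eqref{claim1}, which is far cheaper than the whp bound you require). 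Likewise, the second-moment condition \eqref{F:condition} plays no role in the paper's proof of this lemma --- only the first-moment identity \eqref{calcIf} is used; \eqref{F:condition} enters elsewhere, in controlling the multiple-selection term in the recurrence for $\bar N_k(\sigma)$ --- so its central place in your variance bound is a further sign the argument is aimed at the wrong mechanism.
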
	
		The proof of this lemma is deferred to \namedref{subs conc.W}.
 	
 	We will allow that $m$ depends on $n$, thus $m=m(n)$, as already pointed out previously. 
		In establishing the recurrence relation for $\bar N_{k}(\sigma)$, we will rely on the derivation for $\delta=0$ in \cite[Section 3.1]{flax07}.
		
At each time, we add a new vertex from which $m$ edges are emanating, and  for each of these $m$ emanating edges we need to choose a vertex-endpoint. 
	The first possibility for a vertex to have degree $k$ at time $\sigma +1$ is that the degree at time $\sigma$ was equal to $k$ and that none of the $m$ endpoints, emanating from $x_{\sigma+1}$, attaches to the vertex.
	Furthermore, ignoring for the moment the effect of selecting the same vertex twice or more, the vertex could also have degree $k-1$ at time $\sigma$ and having one endpoint attached to it at time $\sigma+1$. 
	Finally, it is also possible that the newly added vertex $x_{\sigma+1}$ has degree $k$.	
The total number of vertex-endpoints with degree $k$ is distributed as $\mathrm{Bin}\br{m, p_k(\sigma)}$, where
\eqn{\label{eq:p_k}
p_k(\sigma)= \sum_{v\in  D_k(\sigma)}\frac{(k+\delta)\mR(x_v)}{M_{\sigma,n}(x_{\sigma+1})},
}
and $D_k(\sigma) \subset V_\sigma$ is the set of vertices with degree $k$ in the graph $G_\sigma$.
Similarly, the number of vertex-endpoints with degree $k-1$ is distributed as \mproef{$}{$$}\mathrm{Bin}\br{m, p_{k-1}(\sigma)}.\mproef{$}{$$} If the newly added vertex $x_{\sigma+1}$ ends up with degree $k$, then this vertex has $k-m$ self-loops. The number of self-loops, $d_{\sigma+1}(\sigma+1)-m$, is distributed as $\mathrm{Bin}\br{m,p}$, where
\eqn{\label{distbin}		
p=1-{T_{\sigma,n}(x_{\sigma+1})}/{M_{\sigma,n}(x_{\sigma+1})}.
}
For $k\geq m$, this leads  to, 
		\begin{align}\nonumber
		\mexpec{N_k(\sigma+1) | G_\sigma,x_{\sigma+1}}&=
		N_k(\sigma)
				-mp_k(\sigma)
				+mp_{k-1}(\sigma)
		\\&\,\,\,\,\,\,\,\,+\mexpec{\indic{d_{\sigma+1}(\sigma+1)=k}\,|\,G_\sigma,x_{\sigma+1}}	+\mO{m\eta_k(G_\sigma,x_{\sigma+1})},\label{rec.goal}
		\end{align}
		where $\eta_k(G_\sigma,x_{\sigma+1})$ denotes the probability, conditionally on $G_\sigma$, that the same vertex-endpoint is chosen at least twice and at most $k$ times.

	Taking expectations on both sides of \eqref{rec.goal}, we obtain
			\begin{align}
		\bar N_k(\sigma+1)&=\nonumber
		\bar N_k(\sigma)				
				-m\mexpec{\sum_{v\in D_k(\sigma)} {\frac{(k+\delta)\mR(x_v)}{M_{\sigma,n}(x_{\sigma+1})}}}
				\\&\nonumber \quad +m\mexpec{\sum_{v\in D_{k-1}(\sigma)}\frac{(k-1+\delta)\mR(x_v)}{M_{\sigma,n}(x_{\sigma+1})}}
		\\&\quad+\mprob{d_{\sigma+1}(\sigma+1)-m=k-m}	+\mO{m\mexpec{\eta_k(G_\sigma,x_{\sigma+1})}}. \label{rec.d.goal}
		\end{align}

	Let 
		\eqn{ \label{eq:CA}
		{\cal B}_\sigma=\bra{\brc||{T_{\sigma,n}(x_{\sigma+1})-2\Theta I_n \sigma} < C_1\Theta I_n  \sigma^{\gamma}\log n},
		}
	where $\max\{2/\alpha, \theta\} < \gamma < 1$ and $C_1$ is some sufficiently large constant. 
	If 
	$$\sigma \geq t_0=t_0(n)=(\log n)^{2/(1-\gamma)},
	$$
	then ${\cal B}_\sigma$ implies that for sufficiently large $n$, 
	$$
	T_{\sigma,n}(x_{\sigma+1}) \leq 2\Theta I_n \sigma + C_1 \Theta I_n  \sigma^{\gamma} \log n 
	= 2\Theta I_n \sigma \br{1+\mO{\log^{-1} n}} 
	\leq \alpha \Theta I_n \sigma,
	$$
	since $\alpha>2$, and, hence, with high probability
	$$
	M_{\sigma,n}(x_{\sigma+1})=\max\{T_{\sigma,n}(x_{\sigma+1}), \alpha \Theta I_n \sigma\} = \alpha \Theta I_n \sigma.
	$$
	
	Next, we consider each term on the right hand side of \eqref{rec.d.goal} separately, for $\sigma=1,2,\ldots,n$.	
	For the first two terms on the right hand side of \eqref{rec.d.goal} we will use that $p_k(\sigma)$ is a probability and that $\mprob{{\cal B}_n^c} =\mO{n^{-2}}$, for $\sigma >t_0$, see Lemma \ref{conc.W}, which yields
	\eqn{ \label{rel:x:cond}
	\mexpec{p_k(\sigma)}=\mexpec{p_k(\sigma)\,|\, {\cal B}_n}\mprob{{\cal B}_n}+\mexpec{p_k(\sigma)\,|\, {\cal B}_n^c}\mprob{{\cal B}_n^c}
	=\mexpec{p_k(\sigma)\,|\, {\cal B}_n}+\mO{n^{-2}}.
	}
	Also, using that $N_k(\sigma) \leq \sigma$,
	\eqn{ \label{rel:x:cond2}
	\mexpec{ N_k(\sigma) \,|\, {\cal B}_n} 
		= \frac{\mexpec{N_k(\sigma)}-\mexpec{N_k(\sigma)\,|\,{{\cal B}_n^c}} }{\mprob{{\cal B}_n}}
		=\bar N_k(\sigma)+\mO{\sigma n^{-2}}.
	}
	For $\sigma$ sufficiently large, using \eqref{calcIf}, \eqref{def:M}, \eqref{eq:p_k} and \eqref{rel:x:cond2},
	\begin{align} \nonumber
	\mexpec{p_k(\sigma)\,|\, {\cal B}_n}&=\frac{k+\delta}{\alpha\Theta I_n \sigma}\mexpec{\brc.|{\sum_{v\in D_{k}(\sigma)}\mR(x_v)\,}\, {\cal B}_n}	
	\nonumber
	\\&\nonumber=\frac{k+\delta}{\alpha\Theta I_n \sigma}\mexpec{\brc.|{\sum_{v\in D_{k}(\sigma)}
	\int_S F_n(|x_{v}-u|)\,\mathrm d u \,}\, {\cal B}_n}	
	\\&=\frac{(k+\delta)\mexpec{N_k(\sigma) \,|\, {\cal B}_n}}{\alpha\Theta \sigma}
	=\frac{(k+\delta)\bar N_k(\sigma)}{\alpha\Theta \sigma}+\mO{kn^{-2}} \label{impact_n}.
	\end{align}	
	Combining \eqref{rel:x:cond}  and \eqref{impact_n}, we obtain
		\eqn{ \label{master.eq}	
				\mexpec{p_k(\sigma)}=\mexpec{\sum_{v\in D_{k}(\sigma)}{\frac{(k+\delta)\mR(x_v)}{M_{\sigma,n}(x_{\sigma+1})}}}
					=\frac{(k+\delta){\bar N_k(\sigma)}}{\alpha \Theta \sigma}+\mO{kn^{-2}},
		}
	for $\sigma \geq t_0=t_0(n)=(\log n)^{2/(1-\gamma)}$. The above statement remains true when we replace $k$ by $k-1$.

	For the third term on the right hand of \eqref{rec.d.goal}, one can show that for $\sigma \geq t_0$, using that $d_{\sigma+1}(x_{\sigma+1})-m$ has a binomial distribution, see \eqref{distbin}, thus
	\begin{multline*}	
		\mprob{d_{\sigma+1}(x_{\sigma+1})-m=k-m\,|\,{\cal B}_n}
			=\binom{\!m\!}{\!k-m\!}\mexpec{\brc.|{p^{k-m}(1-p)^{2m-k}\,}\,{\cal B}_n}
			\\=\binom{\!m\!}{\!k-m\!}\br{1-\frac2\alpha}^{k-m}\br{\frac2\alpha}^{2k-m}\br{1+\mO{\sigma^{\gamma-1}\log n}},
	\end{multline*}
	where we refer to  \cite[\S3.1]{flax07} for the derivation of the above result. It follows that
	\begin{multline} \label{bin.fin}
	\mprob{d_{\sigma+1}(x_{\sigma+1})=k}
	= \mprob{d_{\sigma+1}(x_{\sigma+1})-m=k-m\,|\,{\cal B}_n}\mprob{{\cal B}_n}+\mO{\mprob{{\cal B}_n^c}}\\=\binom{\!m\!}{\!k-m\!}\br{1-\frac2\alpha}^{k-m}\br{\frac2\alpha}^{2k-m}+\mO{\sigma^{\gamma-1}\log n},
	\end{multline}
	where we refer to \cite{flax07} for the derivation of the error term $\mO{\sigma^{\gamma-1}\log n}$.
	
	For the fourth and final term on the right hand side of \eqref{rec.d.goal}, we use
	$$
	\eta_k(G_\sigma,x_{\sigma+1})
	=\mO{\min\bra{\sum_{i=m}^k\sum_{v\in D_i(\sigma)} \frac{(i+|\delta|)^2\mR(x_v)^2}{M_{\sigma,n}(x_{\sigma+1})^2},1}},
	$$
	which generalizes Equation (5) in \cite{flax07}.
	Using similar arguments that led to \eqref{master.eq}, 
	one can show for 	
	\begin{align} \label{vorw}
		\sigma > t_1  &=t_1(n)=n^{(\gamma+\theta)/2\gamma}& 
		&\text{and}&
		k \leq k_0=k_0(n)=n^{(\gamma-\theta)/4},
	\end{align}
	that 
	\eqn{ \label{par.afs}
	\mexpec{m\eta_k(G_\sigma,x_{\sigma+1})}
		=\mO{\frac{k^2n^\theta}{m\sigma}}=\mO{\sigma^{\gamma-1}}.
	}
		
	Substituting \eqref{master.eq}, \eqref{bin.fin} and \eqref{par.afs} in \eqref{rec.d.goal}, we end up with the following recurrence relation:
		\begin{multline} \label{fault.master}
		\bar N_k(\sigma+1)=\bar N_k(\sigma)  -\frac{m}{\alpha \Theta}(k+\delta)N_k(\sigma)/\sigma+ \frac{m}{\alpha \Theta}(k-1+\delta) \bar N_{k-1}(\sigma)/\sigma
			\\+\indic{m \leq k \leq 2m}\binom{\!m\!}{\!k-m\!}\br{1-2\alpha^{-1}}^{k-m}\br{2\alpha^{-1}}^{2k-m}+\mO{\sigma^{\gamma-1}\log(n)},
		\end{multline}
	for $k \geq m$ and $\bar N_{m-1}(\sigma)=0$ for all $\sigma \geq 0$.
	The above recurrence relation depends on $\sigma$ and $k$.	
	Consider the limiting case, i.e., $\sigma \rightarrow \infty$, and assume that for each $k$ the limit
	\eqn{\label{eq:assump}
	\bar N_k(\sigma)/\sigma \rightarrow p_k
	}	
	exists.
	If this is indeed the case, then in the limit the recurrence relation \eqref{fault.master} yields:
		\begin{multline*}
		p_k=\frac{m}{\alpha \Theta}(k-1+\delta)p_{k-1}-\frac{m}{\alpha \Theta}(k+\delta)p_k \mproef{\\}{\\}+\indic{m \leq k \leq 2m}\binom{\!m\!}{\!k-m\!}\br{1-2\alpha^{-1}}^{k-m}\br{2\alpha^{-1}}^{2k-m},
		\end{multline*}	
		where $k\geq m$ and $p_{m-1}=0$.
	By induction, we then obtain, for $k > 2m$,
		$$
		p_k
					=\frac{\frac{m}{\alpha \Theta}(k-1+\delta)}{1+\frac{m}{\alpha \Theta}(k+\delta)}p_{k-1}
				=\frac{k-1+\delta}{k+\delta+\frac{\alpha \Theta}{m}}p_{k-1}
					=\frac{\Gamma(m+1+\delta+\frac{\alpha \Theta}{m})\Gamma(k+\delta)}{\Gamma(m+\delta)\Gamma(k+1+\delta+\frac{\alpha \Theta}{m})}p_{2m}.
		$$
		Using that $\Gamma(t+a)/\Gamma(t) \sim t^a$ for $a\in[0,1)$ and $t$ large, we can rewrite the above equation  as follow:
		$$
		p_k=\phi_k(m,\alpha,\delta)\br{\frac{m}{k}}^{1+\frac{m}{\alpha \Theta}}=\phi_k(m,\alpha,\delta)\br{\frac{m}{k}}^{1+\alpha(1+\delta/2m)},
		$$
		where $\phi_k(m,\alpha,\delta)=\mO{1}$ and tends to the limit $\phi_\infty (m, \alpha, \delta)$ depending only on $m,\alpha$ and $\delta$ as $k \to \infty$.
	Finally, following the proof in \cite[from equation (15) up to the end of the proof]{flax07}, which shows that there exists a constant $M$ independent from $n$, such that
	\eqn{ \label{eq:inlsuit}
	|\bar N_ k (\sigma) - p_k \sigma|\leq M (n^{1-(\gamma-\theta)/4}+\sigma^\gamma\log n),
	}
	for all $0\leq \sigma \leq n$ and $m \leq k \leq k_0(n)$. 
	Thus, the assumption \eqref{eq:assump} is satisfied.
	By picking $\gamma_1>0$ sufficiently small, we can replace the right hand of \eqref{eq:inlsuit} by $n^{1-\gamma_1}$, and one obtains the claim \eqref{thm:claim1}.

	\section{Coupling}	\label{sec:Coupling}	
		In this section we make preparations for the proofs of Lemma \ref{conc.W} and the concentration result in Theorem \ref{theorem:1}, see \eqref{thm:claim2}. In this section we take $\tau\in \{1,\ldots,n\}$ fixed and we consider the graph process up to time $\tau-1$ resulting in the graph $G_{\tau-1}$. At time $\tau$ we apply the {\bf Growth Rule}, see \namedref{gpaf:sec:model}, twice on $G_{\tau-1}$, independently of each other, which results in the graphs  $G_\tau$ and $\hat G_\tau$.
		The idea is to compare the graphs $G_\tau$ and $\hat G_\tau$ over time by considering $G_\sigma$ and $\hat G_\sigma$ for $\tau \leq \sigma \leq n$. 
	 	To this end, we will introduce two urn processes. 
	 	The urns consist of weighted and numbered balls.
		Instead of choosing a vertex-endpoint $v\in V_{\sigma+1}$ at time $\sigma+1$ by \eqref{eq:ch:1} and \eqref{eq:ch:2}, we will draw (with replacement) a ball proportional to its weight  and then the vertex-endpoint is given by the number on the ball.

				The coupling between the urns will be introduced in four steps. 
		The first step is to introduce for any $\sigma \geq \tau$ two urns.
		Secondly, we will introduce a probabilistic coupling between the two urn processes.		
		Thirdly, we will describe the coupling between the graph processes $G_\sigma$, $\hat G_\sigma$ and the two urn processes.
		Finally, we consider the vertex-endpoints $\mv{\sigma}i$ and $\mhv{\sigma}i$, for $i=1,2,\ldots,m$, in the graphs $G_\sigma$ and $\hat G_\sigma$, respectively, and we will calculate the probability that $\mv{\sigma}i\not=\mhv{\sigma}i$.
	
	\subsection{The two urns} \label{seq:the urns}					
		In this section we describe the contents of the urns corresponding to the graphs $G_\sigma$ and $\hat G_\sigma$, for $\sigma=\tau,\tau+1,\ldots,n,$ and we give an alternative way of choosing the vertex-endpoints using the urns.

		Fix two graph processes $\{G_s\}$ and $\{\hat G_s\}$ such that the graphs up to time $\tau-1$ are identical, i.e., $G_s=\hat G_s$ for $s=0,1,2\ldots,\tau-1$, and that $x_s=\hat x_s$, for $s=\tau+1,\tau+2, \ldots, n$. Thus, the points $x_\tau$ and $\hat x_\tau$ will differ from each other, and, as a consequence, also, the edge sets $E_s$ and $\hat E_s$, for $s=\tau,\tau+1,\ldots,\sigma$, will be different.  	
		Finally, we assume, without loss of generality, that $T_{\sigma,n}(x_{\sigma+1}) \leq \hat T_{\sigma,n}(x_{\sigma+1})$.  	
		
		Next, we will describe the contents of the urns $U_\sigma$ and $\hat U_\sigma$ given the graphs $G_\sigma$ and $\hat G_\sigma$, and the newly added vertex $x_{\sigma +1}$. We will use the following abbreviations:
		\begin{align}
		T_{\sigma,n}&=T_{\sigma,n}(x_{\sigma+1})&
		&\text{and}&
		M_{\sigma,n}&=M_{\sigma,n}(x_{\sigma+1}).&				
		\end{align}		
		Furthermore, if $e$ is an edge, then we denote by $\head{e}$ the endpoint of the edge. Thus, if edge $e$ is added at time $t$, emanating from the vertex $t$, points to a vertex $s \in V_t$ then $\head{e}=s$.

		\bigskip\noindent {\bf Contents of the urns:} 
		\begin{itemize}
			\item For each edge $e\in E_\sigma$, such that $\head{e}\not =\tau$, there is a white ball in $U_\sigma$ of weight $\mR(x_{\head{e}})$ and numbered $\head{e}$. Similarly, for each edge in $e\in \hat E_\sigma$, such that $\head{e}\not =\tau$, there is a white ball in $\hat U_\sigma$ of weight $\mR(\hat x_{\head{e}})=\mR(x_{\head{e}})$ and numbered $\head{e}$.  Observe that $\hat x_{\head{e}}=x_{\head{e}}$ since $\head{e}\not =\tau$.

			\item For each vertex $v\in V_\sigma \backslash \{\tau\}$ there is a red ball in each of the urns $U_\sigma$ and $\hat U_\sigma$ of weight $(m+\delta)\mR(x_v)$ and numbered $v$.
			
			\item For the vertex $\tau$ there is in $U_\sigma$ a purple ball of weight $(d_\sigma(\tau)+\delta)\mR(x_\tau)$ and number $\tau$, and in $\hat U_\sigma$ there is an orange ball of weight $(\mhX d(\tau)+\delta)\mR(\hat x_\tau)$ and numbered $\tau$.
			
			\item For the vertex $\sigma + 1$ each of the urns $U_\sigma$ and $\hat U_\sigma$ contain a green ball of weight $(\alpha\Theta I_n \sigma-\hat T_{\sigma,n})^+$, where $(\cdot)^+=\max\{0, \cdot \}$, and numbered $\sigma+1$. Furthermore, we add only to $U_\sigma$ a blue ball of weight $((\alpha\Theta I_n \sigma-T_{\sigma,n})^+-(\alpha\Theta I_n \sigma-\hat T_{\sigma,n})^+)^+$ and numbered $\sigma+1$.
		\end{itemize}
		\bigskip
		\begin{rem}\label{rem:total_w}
		The total weight of the white and red balls in $U_\sigma$ are given by
		\begin{align*}
		&\sum_{e\in E_\sigma}A_{\sigma,n}(x_{\head{e}})\indic{\head{e}\not = \tau}&
		&\text{and}&
		&\sum_{v \in V_\sigma \backslash\{\tau\}}(m+\delta)A_{\sigma,n}(x_v),&		
		\end{align*}
		respectively, and the weight of the purple ball in $U_\sigma$ can be rewritten as
		$$
		(d_\sigma(\tau)+\delta)A_{\sigma,n}(x_\tau)=\sum_{e\in E_\sigma}A_{\sigma,n}(x_{\head{e}})\indic{\head{e}= \tau}+(m+\delta)A_{\sigma,n}(x_\tau).
		$$
		Therefore, the total weight of the white, red and purple balls in $U_\sigma$ is equal to:
		$$
		\sum_{e\in E_\sigma}A_{\sigma,n}(x_{\head{e}})
		+
		\sum_{v \in V_\sigma}(m+\delta)A_{\sigma,n}(x_v)
		=\sum_{v \in V_\sigma}(d_\sigma(v)+\delta)A_{\sigma,n}(x_v)=T_{\sigma,n}.
		$$		
		Furthermore, from \eqref{def:M}, and some easy calculation, the total weight of all the balls  in $U_\sigma$ is $M_{\sigma,n}$.
	Similarly, the total weight of the white, red and orange balls in the urn $\hat U_\sigma$ is $\hat T_{\sigma,n}$ and the total weight of all the balls in $\hat U_\sigma$ is, precisely $\hat M_{\sigma,n}$.
		\end{rem}
			The weight of a ball depends on the time $\sigma$, the color of the ball and the number on the ball. Let $b$ be a ball in $U_\sigma$ or $\hat U_\sigma$, then we define the weight function $w_\sigma$ as
		\newcommand{\mxi}{\xi}
		\begin{align}\label{eq:weights}
		w_\sigma(b)= 
			\begin{cases}			
			\mR(x_{\mxi(b)}) & \text{if } b \text{ is white},\\
			(m+\delta)\mR(x_{\mxi(b)}) & \text{if } b \text{ is red},\\			
			(d_\sigma(x_\tau)+\delta)\mR(x_\tau)& \text{if } b \text{ is purple},\\
			(\hat d_\sigma(\hat x_\tau)+\delta)\mR(\hat x_\tau)& \text{if } b \text{ is orange},\\
			(\alpha\Theta I_n \sigma-\hat T_{\sigma,n})^+ & \text{if } b \text{ is green},\\
			((\alpha\Theta I_n \sigma-T_{\sigma,n})^+-(\alpha\Theta I_n \sigma-\hat T_{\sigma,n})^+)^+ & \text{if } b \text{ is blue},												
			\end{cases}		
		\end{align}
		where $\xi(b)$ is the number on the ball.
		Observe that the number and the color together determine the weight of a ball.

	We identify a set $B \subset U_\sigma$ or $B \subset \hat U_\sigma$ of distinct balls by the set of pairs $(c,k)$, where $c$ denotes the color and $k$ the number of the ball.
		For any set $B$ of distinct balls, define
	$$
	\mn{B}=\sum_{b\in B}w_\sigma(b).
	$$		
	
	We will draw the balls $\{\mb\sigma i\}_{i=1}^m$ with replacement from the urn $U_\sigma$ proportional to the weight. 
		Let $\{\mhb\sigma i\}_{i=1}^m$ be the sequence of balls drawn from $\hat U_\sigma$, then it is easy to show that 
	\eqn{\label{prob.v}
	\mprob{\xi(\mb{\sigma+1}1)=v\,|\,U_\sigma}=\mprob{\mv{\sigma+1}1=v\,|\,G_{\sigma},x_{\sigma+1}}
	}
	and $\mprob{\xi(\mhb{\sigma+1}1)=v\,|\,\hat U_\sigma}= \mprob{\mhv{\sigma+1}1 = v\,|\,\hat G_{\sigma},\hat x_{\sigma+1}}$, for $v \in V_{\sigma+1}$. As an example we will show \eqref{prob.v} for $v\in V_{\sigma+1}\backslash\{\tau,\sigma+1\}$.
	Observe that in this case the left hand side of \eqref{prob.v} corresponds to the probability on the event that we draw the red ball numbered $v$ or one of the $d_\sigma(v)-m$ white balls, thus
	\begin{align*}
	\mprob{\xi(\mb{\sigma+1}1)=v\,|\,U_\sigma}
	&=
	\frac{(m+\delta)\mR(x_v)+(d_\sigma(v)-m)\mR(x_v)}{\mn{U_\sigma}}
	\\&=\frac{(d_\sigma(v)+\delta)\mR(x_v)}{\mn{U_\sigma}}
	=\mprob{\mv{\sigma+1}1=v\,|\,G_{\sigma},x_{\sigma+1}},
	\end{align*}
	by \eqref{rule.sel}, since $\mn{U_\sigma}=M_{\sigma,n}$ (see Remark \ref{rem:total_w}).
%

	\subsection{The joint distribution of drawing balls}	\label{sec:coupletoballs}
		In this section we describe how we simultaneously draw the balls from the urns $U_\sigma$ and $\hat U_\sigma$.
		As before, we will assume that $T_{\sigma,n} \leq \hat T_{\sigma,n}$, or, equivalently, $\mn{U_\sigma} \leq \mn{\hat U_\sigma}$, see Remark \ref{rem:total_w}.
		In the last part of this section we calculate the probability of the event $\{\mb\sigma i \not = \mhb\sigma i\}$, for $i=1,2,\ldots,m$, and $\tau \leq \sigma \leq n$,  i.e., the event that the two balls $\mb\sigma i$ and $\mhb\sigma i$ in the $i^{\rm th}$ draw do not agree on number or color, which we call a \emph{mismatch}.

			Define the following sets
					\begin{align}\nonumber
					\mX R&=U_\sigma \backslash \mhX U,& C_\sigma&=U_\sigma \cap \mhX U& 	&\text{and}& 	L_\sigma&=\mhX U \backslash \mX U,&
					\end{align}
			where, as before, we compare the balls by color and number. 
			\begin{rem}\label{rem:balls}
				By construction, $\mX L$ only contains white and orange balls, $\mX C$ contains only white, red and green balls, and $\mX R$ contains only  white, purple and blue balls. Furthermore, concerning the weights, we have the following relations
			\begin{align}\label{relCRCLL}
			\mn{\mX C}+\mn{\mX R}&=\mn{\mX U}& 
			&\text{and}&
			&\mn{\mX C}+\mn{\mX L}=\mn{\mhX U}.&
			\end{align}
			\end{rem}

				Next, we give the joint distribution of drawing balls from the urns $U_\sigma$ and $\hat U_\sigma$. 
				
				\bigskip\noindent{\bf The joint distribution:} 				
				Draw, with replacement, $m$ balls $\mb{\sigma+1}1,\ldots,\mb{\sigma+1}m$ from $\mX U$.				
				For convenience we write $\mb{}i=\mb{\sigma+1}i$ for $i=1,\ldots,m$. For each $i$, 
				we define $\mhb{}i=\mhb{\sigma+1}i$ by
				\begin{itemize}
					\item If $\mb{}i \in \mX C $ then, with probability
					\eqn{ \label{redistr.prob}
					\frac{\mn{\mX U}}{\mn{\mhX U}},
					}
					we set $\mhb{}i=\mb{}i$, otherwise we choose $\mhb{}i$ from $\mX L$, i.e., we choose $b \in \mX L$ with probability $w(b)/\mn{\mX L}$; observe that the quotient in \eqref{redistr.prob} is bounded by 1, because, as remarked earlier, $\mn{U_\sigma} \leq \mn{\hat U_\sigma}$.
				
					\item If $\mb{}i \in \mX R$, then we choose $\mhb{}i$ from $\mX L$, i.e., choose $b \in \mX L$ with probability $w_\sigma(b)/\mn{L_\sigma}$.
				\end{itemize}

	\bigskip

	\noindent{\bf The marginal distributions:} Denote by $\mcprob[]{\,\cdot\,}$ the joint probability measure under the above introduced coupling.
	Furthermore, let $\mcprob[\sigma]{\,\cdot\,}=\mcprob[]{\,\cdot\,|\mX U,\mhX U}$.
	We will show that under the coupling 
	\begin{align}\label{eq:check:marg1}
	\mcprob[\sigma]{\mb{}1=b}&=\frac{\mX w(b)}{\mn{\mX U}}=\mprob{\mb{}1=b\,|\,U_\sigma}&
	\intertext{and}\label{eq:check:marg2}
	\mcprob[\sigma]{\mhb{}1=b}&=\frac{\mX w(b)}{\mn{\mhX U}}=\mprob{\mhb{}1=b\,|\,\hat U_\sigma},&
	\end{align}
	for $b \in U_\sigma$ and $b \in \hat U_\sigma$, respectively.	
	The claim \eqref{eq:check:marg1} is true by construction. 
%
	For the claim \eqref{eq:check:marg2}, if $b \in \hat U_\sigma$, then this implies that $b\in C_\sigma$ or $b\in L_\sigma$, but not in both.
	Firstly, assume $b \in \mX C$, then
	\begin{align}
\nonumber	\mcprob[\sigma]{\mhb{}1=b}=\mcprob[\sigma]{\mhb{}1=\mb{}1|\mb{}1=b}\mcprob[\sigma]{\mb{}1=b}=\frac{\mn{\mX U}}{\mn{\mhX U}}\frac{\mX w(b)}{\mn{\mX U}}
	=\frac{\mX w(b)}{\mn{\mhX U}}.
	\end{align}	
	Secondly, if $b \in \mX L$, then
	\begin{align*}
	\mcprob[\sigma]{\mhb{}1=b}&=\mcprob[\sigma]{\mhb{}1=b|\mb{}1 \in \mX C}\mcprob[\sigma]{\mb{}1 \in \mX C}
			\\&\,\,\,\,+\mcprob[\sigma]{\mhb{}1=b|\mb{}1 \in \mX R}\mcprob[\sigma]{\mb{}1 \in \mX R}\\&=\frac{\mX w(b)}{\mn{\mX L}}\br{1-\frac{\mn{\mX U}}{\mn{\mhX U}}}\cdot\frac{\mn{\mX C}}{\mn{\mX U}}+
	\frac{\mX w(b)}{\mn{\mX L}}\cdot\frac{\mn{\mX R}}{\mn{\mX U}}
	\\&=\frac{\mX w(b)\br{\br{\mn{\mX C}+\mn{\mX R}}\mn{\mhX U}-\mn{\mX U}\mn{\mX C}}}{\mn{\mX L}\mn{\mX U}\mn{\mhX U}}
	=\frac{\mX w(b)}{\mn{\mhX U}},
	\end{align*}
	where we used in the last step the relations given by \eqref{relCRCLL}. Hence, also, the claim \eqref{eq:check:marg2} is true.

\subsection{The joint growth rule between coupled graphs}	\label{sec:pertubation}
Fix $\tau \in \{1,2,\ldots,n\}$, as before, and consider the graph process $\{G_s\}_{s=0}^{\tau-1}$. Let $\{\hat G_s\}_{s=0}^{\tau-1}$ be an identical copy of $\{G_s\}_{s=0}^{\tau-1}$, and choose at time $\tau$ the position $x_\tau$ and $\hat x_\tau$ in $G_\tau$ and $\hat G_\tau$, respectively,	at random in $S$, independently of each other.
		Using the urns, we will describe the growth of the graphs $G_\sigma$ and $\hat G_\sigma$ over time.

	At time $\tau$ we apply the {\bf Growth Rule}, independently, on the graphs $G_{\tau-1}$ and $\hat G_{\tau-1}$. Then at time $\sigma+1$, for $\sigma \geq \tau$, let $x_{\sigma+1}$ randomly chosen from $S$ and set $\hat x_{\sigma+1}=x_{\sigma+1}$. Let $U_\sigma$ and $\hat U_\sigma$ the urns correspond to $(G_\sigma, x_{\sigma+1})$ and $(\hat G_\sigma, \hat x_{\sigma+1})$, respectively.
	Note that this is precisely the setting as  described in \namedref{seq:the urns} and, as a consequence, we can use the results of \namedref{sec:coupletoballs}.
	Draw with replacement $m$ balls, $\{b_{\sigma+1}^{\smallsup{i}}\}_{i=1}^m$, from $U_\sigma$, then the vertex-endpoints of $x_{\sigma+1}$ are given $\{\xi(b_{\sigma+1}^{\smallsup{i}})\}_{i=1}^m$. We, also, draw with replacement $m$ balls, $\{\hat b_{\sigma+1}^{\smallsup{i}}\}_{i=1}^m$, from $\hat U_\sigma$, and construct $\hat G_{\sigma+1}$ in the same way.

	\subsection{The probability on a mismatch} \label{sec:mismatchballs}
	The event of a mismatch of vertex-endpoints in the graphs $G_{\sigma}$ and $\hat G_\sigma$, $\sigma \geq \tau$, can be expressed in terms of drawing balls from the urns $U_\sigma$ and $\hat U_\sigma$, since
	\eqn{\label{eventsin}
	\bra{\mv{\sigma+1}1 \not = \mhv{\sigma+1}1}	
	=
	\bra{\xi(\mb{\sigma+1}1) \not = \xi(\mhb{\sigma+1}1)}
	\subset
	\bra{\mb{\sigma+1}1 \not = \mhb{\sigma+1}1}.
	}
	Thus, we will concentrate on the probability of a mismatch between the drawn balls from the urns.
	Without loss of generality, we assumed that $\mn{U_\sigma} \leq \mn{\hat U_\sigma}$ or, equivalently, $T_{\sigma,n} \leq \hat T_{\sigma,n}$. Using the joint distribution of the urns, see Section \ref{sec:coupletoballs}, and \eqref{relCRCLL}, we obtain
	\begin{align}
				\nonumber \mcprob[\sigma]{\mb{\sigma+1}1 \not = \mhb{\sigma+1}1}
				&
				=1- \sum_{b\in\mX C}\mcprob[\sigma]{\mhb{\sigma+1}1 = \mb{\sigma+1}1| \mb{\sigma+1}1=b}\mcprob[\sigma]{\mb{\sigma+1}1=b}
				\\&=1-\sum_{b\in\mX C} \frac{\mn{\mX U}}{\mn{\mhX U}}\cdot \frac{w(b)}{\mn{\mX U}}
				=1-\frac{\mn{\mX C}}{\mn{\mhX U}}
				=\frac{\mn{\mX L}}{\mn{\mhX U}}.
				\label{P(b=hatb)}
	\end{align}
	By \eqref{def:M} and Remark \ref{rem:total_w}, we can bound the denominator on the right hand side of \eqref{P(b=hatb)} from below by
	\eqn{
		\mn{\mhX U}\geq \mn{\mX U}=M_{\sigma,n} \geq \alpha\Theta I_n \sigma.
		\nonumber
		}
	Next, we consider the numerator on the right hand side of \eqref{P(b=hatb)}. The set $\mX L$ only contains white balls and the orange ball, see Remark \ref{rem:balls}.
	 Therefore, compare \eqref{eq:weights}, the total weight of $\mX L$ can be written as
	\eqn{\nonumber \label{||L||}
	\mn{\mX L}
		=\sum_{h \in {\cal E}_\sigma} \mR(x_h)+ (\hat d_\sigma(\tau)+\delta)\mR(\hat x_\tau)
	,
	}
	where
	\eqn{\label{diffE}
		{\cal E}_{\sigma}=
		\cup_{e\in E_{\sigma}\backslash\hat E_{\sigma}}\{e\,:\,\head e\not = \tau\}.
	}
	Thus, the probability on a mismatch between balls is bounded from above by
	\eqn{ \label{eq:eq:diff1}
	\mcprob[\sigma]{\mb{\sigma+1}1 \not = \mhb{\sigma+1}1} 
			\leq \frac{\sum_{h \in {\cal E}_\sigma} \mR(x_h)+ (\hat d_\sigma(\tau)+\delta)\mR(\hat x_\tau)}{\alpha\Theta I_n \sigma} 
	}

	\begin{rem}
	If $T_{\sigma,n} > \hat T_{\sigma,n}$, then it should be clear that one can interchange the roles of $G_\sigma$ and $\hat G_\sigma$ in \namedref{sec:Coupling}, which implies that for this case
	\eqn{ \label{eq:eq:diff2}
	\mcprob[\sigma]{\mb{\sigma+1}1 \not = \mhb{\sigma+1}1} 
			= \frac{\mn{\mX R}}{\mn{\mX U}}
			\leq \frac{\sum_{h\in\hat {\cal E}_\sigma} \mR(x_h)+ (d_\sigma(\tau)+\delta)\mR(x_\tau)}{\alpha\Theta I_n \sigma}, 
	}
	where $\hat {\cal E}_{\sigma}=\cup_{e\in \hat E_{\sigma}\backslash E_{\sigma}}\{ e \,:\, {\head e\not = \tau}\}$.
	\end{rem}

	\section{Proof of the main results} \label{sec:proof.main}		
	In this section we will prove the main results, i.e, Theorem \ref{theorem:1}, \ref{thm:diameter} and \ref{thm:diameter2}.
 The diameter results, Theorem \ref{thm:diameter} and \ref{thm:diameter2}, can be proved almost immediately using the proofs in \cite{flax07}, but this is not true for Theorem \ref{theorem:1}. 
 The proof of Theorem \ref{theorem:1} relies on Lemma \ref{conc.W} and this takes more effort.

 This section is divided into 3 parts: in the first part we will give the proof of Lemma \ref{conc.W}, then, in the second part, we will give the proof of the main results, and in the  last part we show a bound on the number of expected mismatches, which is necessary for the proof of Lemma \ref{conc.W}.
 Before doing so, we will consider the number of mismatches between $G_\sigma$ and $\hat G_\sigma$, for $\sigma \geq \tau \geq 1$, where a perturbation is made at time $\tau$ as defined in Section \ref{sec:pertubation}.

 At each of the times $s=\tau,\tau+1,\ldots,\sigma-1$, we sample (with replacement) $m$ balls from each of the urns $U_s$ and $\hat U_s$. After $m$ draws we end up with the balls $\{\mb{s} i\}_{i=1}^m$ and $\{\mhb{s} i\}_{i=1}^m$.
 If the two balls $\mb s i$ and $\mhb s i$ in the $i^{\rm th}$ draw do not agree on number or color, then, as before, we call the draw a mismatch, i.e., $\{\mb s i \not = \mhb s i\}$.
 Let $\Delta^\tau_\sigma$ the total number of mismatches between the urns $U_\sigma$ and $\hat U_\sigma$, then
	\eqn{\label{totmismatches}
	\Delta_\sigma=\Delta_\sigma^\tau=\sum_{s=\tau}^\sigma \sum_{i=1}^m \indic{\mb s i \not = \mhb s i}.
	}
	Furthermore, for $u \in S$ define
	\eqn{\label{def:Delta.x}
	\Delta_\sigma(u)=\Delta^\tau_\sigma(u)=\sum_{s=\tau}^\sigma \sum_{i=1}^m \brc||{F_n(|x_{\xi( \mb s i)}-u|)-F_n(|x_{\xi(\mhb s i)}-u|)}.
	}
	Next, we will relate the expected values of \eqref{totmismatches} and \eqref{def:Delta.x}.
	Fix any $y\in S$ and let $U$ be randomly chosen in $S$, then 
		\begin{align*} \nonumber
	\mexpec{\brc||{{F_n(|x_{\xi( \mb s i)}-U|)-F_n(|x_{\xi(\mhb s i)}-U|)}}} 
		\hspace{-5cm}&\hspace{5cm}
	 \\ &\leq \mexpec{\br{{F_n(|x_{\xi( \mb s i)}-U|)+F_n(|x_{\xi(\mhb s i)}-U|)}}\indic{\mb s i \not= \mhb s i}}\nonumber
	\\ & \leq  \mexpec{2\int_SF_n(|y-u|)\,\mathrm du \indic{\mb s i \not= \mhb s i}}
	=2I_n\mexpec{\indic{\mb s i \not= \mhb s i}} ,
	\end{align*}	
	where we used \eqref{calcIf}. Thus,
	\eqn{\label{eq.shrtcut.2}
	\mexpec{\Delta_\sigma(U)} \leq 2 I_n\sum_{s=\tau}^\sigma \sum_{i=1}^m\mexpec{ \indic{\mb s i \not= \mhb s i}}
	=2I_n\mexpec{\Delta_\sigma}
	.
	}
	
	\begin{lem} \label{lem:miscouplings}
		Under the conditions of Theorem \ref{theorem:1}, let $\sigma \geq 1$ and $U$ randomly chosen in $S$, then for some constant $C>0$,
		\eqn{\label{anders1}
		\mexpec{\Delta^\tau_\sigma(U)}\leq C m I_n \br{\frac{\sigma}{\tau}}^{\frac{1}{\alpha \Theta/ m}}\log \sigma,
		}
		and, as a consequence,
		\eqn{\nonumber
		\mexpec{\Delta^\tau_\sigma(U)} \leq C m I_n \br{\frac{\sigma}{\tau}}^{\frac{2}{\alpha}},
		}
		since $(\Theta/ m)^{-1} < 2$.
	\end{lem}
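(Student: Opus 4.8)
The plan is to reduce \eqref{anders1} to a bound on the expected number of mismatches $\expec[\Delta^\tau_\sigma]$ and then solve a one-step recursion for it by a discrete Grönwall argument. By \eqref{eq.shrtcut.2} we already have $\expec[\Delta^\tau_\sigma(U)]\leq 2I_n\expec[\Delta^\tau_\sigma]$, so it suffices to show $\expec[\Delta^\tau_\sigma]=\mO{m(\sigma/\tau)^{\beta}\log\sigma}$ where $\beta:=m/(\alpha\Theta)=1/(\alpha\Theta/m)$; then \eqref{anders1} follows, and the "consequence" follows because $\beta<2/\alpha$ (equivalently $(\Theta/m)^{-1}=2m/(2m+\delta)<2$, using $2m+\delta>m$), so the slack in the exponent together with $\log(\sigma/\tau)=\mO{(\sigma/\tau)^{2/\alpha-\beta}}$ absorbs the logarithm.

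Write $D_\sigma:=\expec[\Delta^\tau_\sigma]$. I would first set up the recursion. Conditioning on $(G_\sigma,\hat G_\sigma)$ but not on the freshly sampled point $x_{\sigma+1}$, each of the $m$ draws at time $\sigma+1$ is a mismatch with probability $\mn{L_\sigma}/\mn{\hat U_\sigma}$ by \eqref{P(b=hatb)} (the case $T_{\sigma,n}>\hat T_{\sigma,n}$ being symmetric, cf. \eqref{eq:eq:diff2}). The denominator is bounded below deterministically, $\mn{\hat U_\sigma}=\hat M_{\sigma,n}\geq\alpha\Theta I_n\sigma$ by \eqref{def:M}, which lets me average the numerator over $x_{\sigma+1}$. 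By Remark~\ref{rem:balls}, $L_\sigma$ consists of the white balls carrying the discrepancy edges $\mathcal{E}_\sigma$ of \eqref{diffE} together with the single orange ball numbered $\tau$, so $\mn{L_\sigma}=\sum_{h\in\mathcal{E}_\sigma}\mR(x_h)+(\hat d_\sigma(\tau)+\delta)\mR(\hat x_\tau)$; averaging over $x_{\sigma+1}$ and applying \eqref{calcIf} replaces every $\mR(\cdot)$ by $I_n$, so that
\[
D_{\sigma+1}-D_\sigma\;\leq\;\frac{m}{\alpha\Theta\,\sigma}\bigl(\expec[|\mathcal{E}_\sigma|]+\expec[\hat d_\sigma(\tau)]+\delta\bigr).
\]

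To close this recursion I would supply two auxiliary estimates. First, $|\mathcal{E}_\sigma|\leq\Delta^\tau_\sigma+m$: an edge of $G_\sigma$ absent from $\hat G_\sigma$ and not pointing at $\tau$ is either one of the at most $m$ out-edges of vertex $\tau$ created in the independent growth step at time $\tau$, or the out-edge of some later vertex whose draw was a number-mismatch, hence a mismatch counted in $\Delta^\tau_\sigma$. Second, the degree of the fixed vertex $\tau$ grows slowly: the number of new endpoints landing on $\tau$ at time $s+1$ is $\mathrm{Bin}\bigl(m,(\hat d_s(\tau)+\delta)\mR(\hat x_\tau)/\hat M_{s,n}\bigr)$, so averaging over $\hat x_{s+1}$ via \eqref{calcIf} and using $\hat M_{s,n}\geq\alpha\Theta I_n s$ gives $\expec[\hat d_{s+1}(\tau)+\delta]\leq(1+\beta/s)\expec[\hat d_s(\tau)+\delta]$, which telescopes from $\expec[\hat d_\tau(\tau)+\delta]\leq 2\Theta$ to $\expec[\hat d_\sigma(\tau)+\delta]=\mO{\Theta(\sigma/\tau)^{\beta}}$. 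Substituting both bounds (and $2\Theta=2m+\delta=\mO{m}$) gives, for $\sigma\geq\tau$,
\[
D_{\sigma+1}\;\leq\;\Bigl(1+\frac{\beta}{\sigma}\Bigr)D_\sigma+\frac{c\,\beta\,m}{\sigma}\Bigl(\frac{\sigma}{\tau}\Bigr)^{\beta},\qquad D_\tau=\mO{m}.
\]
Dividing by $P_\sigma:=\prod_{s=\tau}^{\sigma-1}(1+\beta/s)\asymp(\sigma/\tau)^{\beta}$ and summing (variation of parameters) gives $D_\sigma/P_\sigma=\mO{m+\beta m\sum_{s=\tau}^{\sigma-1}1/s}=\mO{m\log\sigma}$, hence $D_\sigma=\mO{m(\sigma/\tau)^{\beta}\log\sigma}$, and multiplying by $2I_n$ yields \eqref{anders1}; the "consequence" then follows as above.

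I expect the main obstacle to be the bookkeeping needed to obtain a \emph{one-dimensional} recursion with the right exponent. The crucial points are that $|\mathcal{E}_\sigma|$ has to be controlled by the \emph{accumulated} mismatch count $\Delta^\tau_\sigma$ (so that $D_\sigma$ enters only once and the homogeneous growth is $(\sigma/\tau)^\beta$ rather than $(\sigma/\tau)^{2\beta}$), that the unavoidable weight of the ball numbered $\tau$ forces a separate self-contained control of $\expec[\hat d_\sigma(\tau)]$, and that averaging over the newly sampled point $x_{\sigma+1}$ — legitimate precisely because the lower bound $\hat M_{\sigma,n}\geq\alpha\Theta I_n\sigma$ on the denominator in \eqref{P(b=hatb)} is deterministic — is what turns $\mR(\cdot)$ into $I_n$ and keeps the graph geometry out of the recursion.
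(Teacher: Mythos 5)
Your overall architecture coincides with the paper's: reduce to $\expec[\Delta^\tau_\sigma]$ via \eqref{eq.shrtcut.2}, set up a one-step recursion for the expected number of mismatches, control $\expec[\hat d_\sigma(\tau)+\delta]$ separately by a telescoping product (this is exactly the paper's claim \eqref{claim1}), and solve the recursion by variation of parameters. The gap is at the central step — the very point you flag as crucial — namely the assertion that the recursion has homogeneous coefficient $1+\beta/\sigma$ with $\beta=m/(\alpha\Theta)$. The conditional mismatch probability is $\mn{L_\sigma}/\mn{\hat U_\sigma}$ only on the event $\{T_{\sigma,n}\le\hat T_{\sigma,n}\}$ and $\mn{R_\sigma}/\mn{U_\sigma}$ on its complement, and \emph{both events depend on $x_{\sigma+1}$}, since $T_{\sigma,n}=T_{\sigma,n}(x_{\sigma+1})$. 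To ``replace every $A_{\sigma,n}(\cdot)$ by $I_n$'' you must average over $x_{\sigma+1}$, and the only way to do this term by term is to discard the two indicators; but then you pick up \emph{both} discrepancy sums, over $\mathcal{E}_\sigma$ and over $\hat{\mathcal{E}}_\sigma$, each of which can have up to $\Delta^\tau_\sigma+m$ terms (a single mismatched draw can create one edge in $E_\sigma\backslash\hat E_\sigma$ \emph{and} one in $\hat E_\sigma\backslash E_\sigma$). That yields coefficient $1+2\beta/\sigma$ and hence exponent $2\beta=4m/(\alpha(2m+\delta))$, which for $\delta<0$ exceeds $2/\alpha$; the stated ``consequence'' and the downstream Azuma argument would then fail. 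Keeping the indicators and averaging anyway is not legitimate either, because $A_{\sigma,n}(x_h)=F_n(|x_h-x_{\sigma+1}|)$ is correlated with $\indic{T_{\sigma,n}\le\hat T_{\sigma,n}}$, and the one-sided bound $\expec[A_{\sigma,n}(x_h)\indic{\cdot}]\le I_n$ again forces you to keep both sums.

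The paper's proof of \eqref{claim2} supplies the missing cancellation. It writes the indicator-weighted numerator as $\sum_{s,i}\indic{\my si\neq\mhy si}A_{\sigma,n}(\mhy si)$ plus the correction $\indic{T_{\sigma,n}\le\hat T_{\sigma,n}}\sum_{s,i}\br{A_{\sigma,n}(\my si)-A_{\sigma,n}(\mhy si)}$, and uses the exact identity
$\sum_{s,i}\br{A_{\sigma,n}(\my si)-A_{\sigma,n}(\mhy si)}=T_{\sigma,n}-\hat T_{\sigma,n}-(m+\delta)\br{A_{\sigma,n}(x_\tau)-A_{\sigma,n}(\hat x_\tau)}$,
so that on $\{T_{\sigma,n}\le\hat T_{\sigma,n}\}$ the correction is at most the boundary term $(m+\delta)\br{A_{\sigma,n}(x_\tau)+A_{\sigma,n}(\hat x_\tau)}$, of expectation $2(m+\delta)I_n$. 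The remaining sum carries no $x_{\sigma+1}$-dependent indicator, averages to $I_n\sum_{s,i}\indic{\my si\neq\mhy si}\le I_n\br{\Delta^\tau_\sigma+d_\sigma(\tau)}$, and produces the single copy of $\Delta^\tau_\sigma$ that your recursion needs. Without this identity (or an equivalent device), the exponent $1/(\alpha\Theta/m)$ in \eqref{anders1} is not established.
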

	The proof of the above lemma is deferred to \S\ref{bound.mismatches}.

%
%
%

			\begin{rem}
		For the proof of the main result, we need that the number of mismatches is of $o(\sigma)$, which implies that the exponent in \eqref{anders1} should be smaller than 1, i.e., $m/\alpha\Theta<1$.
		For $\alpha>2$ and $\delta>-m$ this is indeed the case:
		\begin{align*}
		\frac{m}{\Theta}  \leq \frac{2m}{m+(m+\delta)} < \frac 2m \leq 2,
		\end{align*}
		thus $m/\alpha\Theta < 1$.
		
		If $\delta=0$, which is precisely the model introduced in \cite{flax07}, then the condition simplifies to $1/\alpha<1$, which is a weaker condition than the condition used in \cite{flax07}: $2/\alpha<1$. Nevertheless, we cannot get rid of the condition $\alpha>2$, because we need that the event ${\cal B}_\sigma$ occurs with high probability, see \eqref{eq:CA}.
	\end{rem}

			\subsection{Proof of Lemma \ref{conc.W}} \label{subs conc.W}
			
		In this section we will prove Lemma \ref{conc.W} using the Azuma-Hoeffding inequality, which provides exponential bounds for the tails of a special class of martingales:
			\begin{lem}\label{Hoeffdingvaria} 
			Let $\{X_\tau\}_{\tau\geq 0}$ be a martingale process with the property that, with probability 1, there exists a sequence of positive constants $\{e_\tau\}_{\tau \geq 1}$ such that
			$$
			\brc||{X_\tau-X_{\tau-1}} \leq e_\tau,
			$$
			for all $\tau \geq 1$. Then, for every $\lambda >0$,
			$$
			\mprob{|X_\sigma-X_0| \geq \lambda} \leq 2\exp\bra{-\frac{\lambda^2}{4\sum_{\tau=1}^\sigma e_\tau^2}}.
			$$
			\end{lem}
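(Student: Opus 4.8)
The plan is to prove Lemma~\ref{Hoeffdingvaria} by the classical exponential-moment (Chernoff) argument for martingales; the only ingredient that needs a real argument is a sub-Gaussian bound for the moment generating function of a single bounded martingale increment. First I would reduce to a one-sided estimate: the process $\{-X_\tau\}_{\tau\ge0}$ is again a martingale (with respect to the same filtration $\{\mathcal F_\tau\}$, which may be taken to be the one generated by $\{X_\tau\}$) and satisfies the same increment bounds, so by the union bound it suffices to show $\mathbb P\bigl(X_\sigma-X_0\ge\lambda\bigr)\le\exp\{-\lambda^2/(4\sum_{\tau=1}^\sigma e_\tau^2)\}$, the tail of $-X_\sigma$ contributing the overall factor $2$. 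Subtracting $X_0$ I may assume $X_0=0$, and I write $D_\tau=X_\tau-X_{\tau-1}$ for the martingale differences, so that $\mathbb E[D_\tau\mid\mathcal F_{\tau-1}]=0$ and $|D_\tau|\le e_\tau$ almost surely.

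Next I would fix $t>0$ and apply Markov's inequality to $e^{tX_\sigma}$, reducing the task to an upper bound on $\mathbb E[e^{tX_\sigma}]$. Conditioning on $\mathcal F_{\sigma-1}$ and using $X_\sigma=X_{\sigma-1}+D_\sigma$ gives $\mathbb E[e^{tX_\sigma}]=\mathbb E\bigl[e^{tX_{\sigma-1}}\,\mathbb E[e^{tD_\sigma}\mid\mathcal F_{\sigma-1}]\bigr]$. The step that actually requires work is Hoeffding's lemma: for a conditionally mean-zero $D$ with $|D|\le e_\sigma$ one has $\mathbb E[e^{tD}\mid\mathcal F_{\sigma-1}]\le e^{t^2e_\sigma^2}$. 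I would prove this by writing $D$ as the convex combination $\tfrac{e_\sigma-D}{2e_\sigma}(-e_\sigma)+\tfrac{e_\sigma+D}{2e_\sigma}(e_\sigma)$, using convexity of $x\mapsto e^{tx}$, taking conditional expectations (the two weights have conditional mean $\tfrac12$ because $\mathbb E[D\mid\mathcal F_{\sigma-1}]=0$) to obtain the bound $\cosh(te_\sigma)$, and finishing with the elementary inequality $\cosh s\le e^{s^2}$. Plugging this back and iterating over $\tau=\sigma,\sigma-1,\ldots,1$ yields $\mathbb E[e^{tX_\sigma}]\le\exp\{t^2\sum_{\tau=1}^\sigma e_\tau^2\}$, hence $\mathbb P(X_\sigma\ge\lambda)\le\exp\{-t\lambda+t^2\sum_{\tau=1}^\sigma e_\tau^2\}$. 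Choosing $t=\lambda/(2\sum_{\tau=1}^\sigma e_\tau^2)$ minimizes the exponent and gives exactly the one-sided bound above.

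I do not expect a genuine obstacle: the statement is standard and each step is routine. The only point deserving care is Hoeffding's lemma for the conditional moment generating function; it is worth remarking that using the deliberately wasteful estimate $\cosh s\le e^{s^2}$ (rather than the sharp $\cosh s\le e^{s^2/2}$) is precisely what produces the constant $4$ in the exponent of the conclusion, which is all that Lemma~\ref{conc.W} and the concentration claim \eqref{thm:claim2} will require downstream. One should also verify measurability and integrability of the $D_\tau$ so that the nested conditional expectations are well defined, but this is immediate from the almost-sure bound $|D_\tau|\le e_\tau$.
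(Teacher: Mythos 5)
Your proof is correct. The paper itself does not prove this lemma at all --- it simply refers the reader to Hoeffding (1963) --- so there is nothing to compare against beyond noting that your argument is the standard exponential-moment proof of the Azuma--Hoeffding inequality that the citation is standing in for. Every step checks out: the reduction to a one-sided bound via the martingale $\{-X_\tau\}$ and a union bound (yielding the factor $2$), Hoeffding's lemma via convexity of $x\mapsto e^{tx}$ on $[-e_\tau,e_\tau]$ giving $\mathbb E[e^{tD_\tau}\mid\mathcal F_{\tau-1}]\le\cosh(te_\tau)\le e^{t^2e_\tau^2}$, the telescoping of conditional expectations, and the optimization $t=\lambda/(2\sum_{\tau=1}^\sigma e_\tau^2)$ which indeed produces the exponent $-\lambda^2/(4\sum_{\tau=1}^\sigma e_\tau^2)$. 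Your remark that the constant $4$ (rather than the sharp $2$) comes from using $\cosh s\le e^{s^2}$ in place of $\cosh s\le e^{s^2/2}$ is exactly right, and the weaker constant is indeed all that is needed for Lemma 2.1 and the concentration claim in Theorem 1.2.
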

			For a proof of this lemma, we refer to \cite{Hoeffding:1963:PIS}.
			
			We will apply Lemma \ref{Hoeffdingvaria} by taking a Doob-type martingale \mproef{$}{$$}X_\tau=\mexpec{T_{\sigma,n}(U) \,|\, G_\tau}\mproef{$}{$$}, where $U$ is chosen at random in $S$. By, convention, we let $G_0$ be the empty graph, then
			\begin{align*}
			X_0&=\mexpec{T_{\sigma,n}(U)}& &\text{and}& X_\sigma&=T_{\sigma,n}(U).&
			\end{align*}
			At each time step $s$ we add a new vertex and $m$ edges, see the {\bf Growth Rule} in Section \ref{gpaf:sec:model}, call this an action.
			We call an action $A$ \emph{acceptable} if the action can be applied with positive probability. Furthermore, denote by ${\cal A}(G)$ the set of all \emph{acceptable actions} that can be applied on the graph $G$.

	Clearly,
		\begin{multline}\label{eq.sup.e}
	\brc||{\mexpec{T_{\sigma,n}(U)|G_{\tau-1}} - \mexpec{T_{\sigma,n}(U)|G_{\tau}}}
		\\\leq 
		\sup_{G_{\tau-1}} 
		\mathop{\sup_{A \in {\cal A}(G_{\tau-1})}}_{\hat A \in {\cal A}(\hat G_{\tau-1})}
		\brc||{\mexpec{T_{\sigma,n}(U)|G_{\tau-1}(A)} - \mexpec{T_{\sigma,n}(U)|G_{\tau-1}(\hat A)}},
	\end{multline}
	where the first supremum is taken over all possible graphs $G_{\tau-1}$.
	Next, fix the graph $G_{\tau-1}$ and let $G_\tau=G_{\tau-1}(A)$ be the graph by applying the action $A$ on the graph $G_{\tau-1}$. Similarly, define $\hat G_\tau=G_{\tau-1}(\hat A)$. Thus, one can rewrite the right hand side of \eqref{eq.sup.e} as
	$$
	e_\tau = \sup_{G_{\tau-1}} 
	\mathop{\sup_{A \in {\cal A}(G_{\tau-1})}}_{\hat A \in {\cal A}(\hat G_{\tau-1})}
	\brc||{\mexpec{T_{\sigma,n}(U)|G_\tau} - \mexpec{\hat T_{\sigma,n}(U)|\hat G_\tau}}
	.$$	
	Using the triangle inequality, the above implies, under the coupling,
	\eqn{\label{P:max:gr}
	e_\tau \leq \sup_{G_{\tau-1}}
	\mathop{\sup_{A \in {\cal A}(G_{\tau-1})}}_{\hat A \in {\cal A}(\hat G_{\tau-1})}
	\mcexpec[]{\brc||{T_{\sigma,n}(U) -\hat T_{\sigma,n}(U)} } . 
	}

	We claim that,  independently of $G_{\tau-1}, A$ and $\hat A$, and, for $\sigma \geq \tau$,
	\eqn{\label{das:sing}
	\mcexpec[]{\brc||{T_{\sigma,n}(U) -\hat T_{\sigma,n}(U)} }
	\leq \tilde C m I_n(\sigma/\tau)^{2/\alpha},
	}
	where the proof of this claim is deferred to the end of this section.
	Thus, using \eqref{P:max:gr} and \eqref{das:sing},
	\eqn{ \nonumber 
	\sum_{\tau=1}^\sigma e_\tau^2 \leq  \tilde C^2 m^2 I_n^2 \sigma^{4/\alpha}\sum_{\tau=1}^\sigma \tau^{-4/\alpha}=\mO{m ^2I_n^2(\sigma^{4/\alpha}+\sigma\log \sigma)}.
	}
	To show the above, let $\beta=4/\alpha$, then $\beta \in(0,2)$. If $\beta \in (1,2)$, then $\sum_{\tau=1}^\sigma \tau^{-\beta} < \infty$, and, if $\beta\in(0,1]$, then 
	$$
	\sigma^{\beta}\sum_{\tau=1}^\sigma \tau^{-\beta} \leq 
	\sigma^{\beta}+\sum_{\tau=2}^\sigma (\tau/\sigma)^{-\beta}
	\leq  \sigma^{\beta}+\sigma\int_{x=1}^{\sigma}x^{-1}\,\mathrm{d}x
	=\sigma^{\beta}+\sigma \log \sigma.
	$$
	From Lemma \ref{Hoeffdingvaria} we then obtain for some constant $C_1$,
	\eqn{\nonumber
	\mprob{|T_{\sigma,n}(U)-\mexpec{T_{\sigma,n}(U)}| \geq C_1 m I_n  (\sigma^{2/\alpha}+\sigma^{1/2}\log \sigma)(\log n)^{1/2}}
	\leq 2e^{-2\log n} 
	.
	}
	By taking $n$ sufficiently large, we can replace $C_1(\log n)^{1/2}$ by $\log n$, which is, precisely, the statement of Lemma \ref{conc.W}, given the claim \eqref{das:sing}. \qed

	\bigskip\noindent{\bf Proof of claim \eqref{das:sing}:} 
	Denote by $d^-_\sigma(v)$ the in-degree of the vertex $v$ at time $\sigma$, and observe that
	\eqn{\label{P:din}
	d_\sigma(v)=d^-_\sigma(v)+m,
	}
	since each vertex has by construction $m$ edges pointing outward to other vertices or itself. Furthermore, we denote by  $\my s i$ the position of the  $i^{\mathrm th}$ vertex-endpoint at time $s$, thus
	\eqn{\label{P:my}
	\my s i=x_{\mv s i}.
	}
	By construction of $G_\tau$ and $\hat G_\tau$, we can apply the coupling introduced in \namedref{sec:pertubation}.
	Rewrite $T_{\sigma,n}(U)$, see \eqref{eq:Tt}, using \eqref{P:din}, \eqref{P:my}, and the coupling, as
	\begin{align}
		T_{\sigma,n}(U)&\nonumber
					=\sum_{v=1}^\sigma d_\sigma(v)F_n(|x_v-U|) + \delta\sum_{v=1}^\sigma F_n(|x_v-U|)
					\nonumber
			\\&\nonumber=\sum_{v=1}^\sigma d^-_\sigma(v)F_n(|x_v-U|) + (m+\delta)\sum_{v=1}^\sigma F_n(|x_v-U|)
					\\\nonumber& =	\sum_{s=1}^\sigma\sum_{i=1}^m F_n(|\my s i-U|) +(m+\delta)\sum_{v=1}^\sigma F_n(|x_v-U|)
			 \\&=	\sum_{s=1}^\sigma\sum_{i=1}^m F_n(|x_{\xi(\mb s i)}-U|) +(m+\delta)\sum_{v=1}^\sigma F_n(|x_v-U|).
			 \nonumber
	\end{align}

	Up to and including time $\tau-1$ both graphs are identical, thus the absolute difference 
	$$
	\brc||{T_{\sigma,n}(U)-\hat T_{\sigma,n}(U)},
	$$ equals to
		\begin{multline*} 
	\brc|.{\sum_{s=\tau}^\sigma\sum_{i=1}^m \br{F_n(|x_{\xi(\mb s i)}-U|)-F_n(|x_{\xi(\mhb s i)}-U|)}}\\\brc.|{\phantom{\sum_{s=\tau}^\sigma\sum_{i=1}^m}+(m+\delta) \br{F_n(|x_\tau-U|)-F_n(|\hat x_\tau-U|)}}.
		\end{multline*}	
	 Using the triangle inequality and \eqref{def:Delta.x}, we obtain
	$$
	\brc||{T_{\sigma,n}(U)-\hat T_{\sigma,n}(U)}\leq \Delta_\sigma(U)+(m+\delta)\br{F_n(|x_\tau-U|)+F_n(|\hat x_\tau-U|)}.
	$$	
	Taking expectations on both sides of the above display, and using \eqref{eq.shrtcut.0} and \eqref{eq.shrtcut.2}, yields
	\eqn{\label{Dler}
	\mcexpec[]{|T_{\sigma,n}(U)-\hat T_{\sigma,n}(U)|} 
		\leq 2I_n(\mcexpec[]{\Delta_\sigma}+(m+\delta)).
	}
	Applying Lemma \ref{lem:miscouplings} on \eqref{Dler} finally results in
	$$
	\mcexpec[]{|T_{\sigma,n}(U)-\hat T_{\sigma,n}(U)|} 
		\leq \tilde C m I_n (\sigma/\tau)^{2/\alpha},
	$$ 
	for some constant $\tilde C$. This is precisely the claim \eqref{das:sing}.
	\qed

	\subsection{Proof of the main results}
	
	In this section we show the main results. 
	The proof of Theorem \ref{theorem:1} is almost similar to the proof of Lemma \ref{conc.W}. The diameter results, i.e, Theorem \ref{thm:diameter} and \ref{thm:diameter2} will be proved by using the proofs in \cite{flax07}.
	
	\bigskip
	
	\noindent {\bf Proof of Theorem \ref{theorem:1}:}
	The first part of Theorem \ref{theorem:1}, i.e., claim \eqref{thm:claim1}, has been proved in Section \ref{sec:recurrence}. For the second part, i.e., claim \eqref{thm:claim2}, we now give a proof, which is similar to the proof of Lemma  \ref{conc.W}. Therefore, we follow the proof of the previous Section \ref{subs conc.W}, where we now choose $X_\tau=\mexpec{N_k(n)\,|\,G_\tau}$ instead of $X_\tau=\mexpec{T_{\sigma,n}(U) \,|\, G_\tau}$. Similar to \eqref{eq.sup.e}, we have that
	\begin{multline*}
	\brc||{\mexpec{N_k(n)|G_{\tau-1}} - \mexpec{N_k(n)|G_{\tau}}}
		\\\leq 
		\sup_{G_{\tau-1}} 
		\mathop{\sup_{A \in {\cal A}(G_{\tau-1})}}_{\hat A \in {\cal A}(\hat G_{\tau-1})}
		\brc||{\mexpec{N_k(n)|G_{\tau-1}(A)} - \mexpec{N_k(n)|G_{\tau-1}(\hat A)}}.
	\end{multline*}
	Using the coupling, we can bound the right hand side in the above display by twice the number of mismatches, since each mismatch can influence at most two edges.
	Thus,
	$$
	\brc||{\mexpec{N_k(n)|G_{\tau-1}} - \mexpec{N_k(n)|G_{\tau}}} \leq 2\mcexpec[]{\Delta^\tau_n}.
	$$
	Therefore, we can take $e_\tau=2\mcexpec[]{\Delta^\tau_n}$ and we, again, can apply Lemma \ref{Hoeffdingvaria}, as done in the previous section, which proves claim \eqref{thm:claim2} and hence Theorem \ref{theorem:1}.\qed
	
	\bigskip
	
	\noindent {\bf Proof of Theorem \ref{thm:diameter}:}
		The proof is almost identical to the proof of \cite[Theorem 2]{flax07}. 
		To apply this proof for general $\delta > -m$, we only need to replace the constant $c_3$ in \cite{flax07} by $c^*_3$, where $c^*_3=c_3/2$ and $c_3$ is the constant of condition {\bf S3}, see Section \ref{seq:heuristics}. This will be explained in more detail now.
		
		Pick $\mu$ and $\rho_n=\rho_n(\mu, F_n)$ such that $F_n$ is smooth for $\mu$, see conditions {\bf S1}, {\bf S2} and {\bf S3}, see Section \ref{seq:heuristics}.
		Fix $u \in S$ and denote by $A_{\rho_n}$ the spherical cap with center $u$ and radius $\rho_n$, then there exists positive constants $c_1$ and $c_2$, independent of $\rho_n$, such that
		\eqn{\label{P:sdf:An}
		A_{\rho_n}=\int_{\{w \in S\,:\, |w-u| \leq \rho_n\}} \,\mathrm d w\in [c_1\rho_n^2,c_2\rho_n^2],
		}		
		which is shown in \cite{flax07}. 
		Furthermore, in \cite{flax07} the authors consider the graph at certain time steps $t_s$, where $s$ is a positive integer, such that the area of the spherical cap is given by 
		\eqn{\label{P:sdf:An2}
		s/2 \leq A_{\rho_n}t_s \leq 3s/2.
		}
		In the proof of \cite[Theorem 2]{flax07}, the essential step is the statement
		that the probability that $v_{t_s}$ chooses vertex $v \in V_{t_s}$, assuming that $|x_{t_s} -x_v| \leq 2\rho_n$, is at least $\frac{2c_1c_3}{\alpha s}$, i.e.,
		\eqn{\nonumber 
		\mprob{v^{\smallsup{1}}_{t_s}=v \,|\,G_{t_s-1}} \geq \frac{2c_1c_3}{\alpha s}.
		}
		In our model this is still true, when we replace $c_3$ by $c^*_3=c_3/2$, since, using the assumptions {\bf S1}, {\bf S2} and {\bf S3}, \eqref{P:sdf:An} and \eqref{P:sdf:An2},
		\begin{align*}
		\mprob{v^{\smallsup{1}}_{t_s}=v \,|\,G_{t_s-1}} &\geq \frac{(d_{t_s}(v)+\delta)F_n(|x_{t_s}-x_v|)}{\alpha\Theta I_n t_s}
		\geq \frac{(m+\delta)F_n(2\rho_n)}{\alpha \Theta I_n t_s }
		\\&		
		\geq \frac{2(m+\delta)A_{\rho_n}F_n(2\rho_n)}{\alpha \Theta I_n s }
		\geq \frac{2(m+\delta)c_1\rho_n^2F_n(2\rho_n)}{\alpha \Theta I_n s }
				\\&
				\geq \frac{2(m+\delta)c_1c_3}{\alpha \Theta s } = \frac{(m+\delta)}{\Theta } \frac{2c_1c_3}{\alpha s } > \frac{1}{2}\frac{2c_1c_3}{\alpha  s }
		=\frac{2c_1c^*_3}{\alpha s},		
		\end{align*}
		where we used that $(m+\delta)/\Theta=1+\delta/(2m+\delta) >1/2$ for $-m < \delta \leq 0$ and $(m+\delta)/\Theta \geq 1 > 1/2$ for $\delta>0$.
		If we replace the constant $c_3$ by $c_3^*$ in the proof of Theorem 2, then the proof of \cite{flax07} holds without further modifications.
		\qed

	\bigskip
	
	\noindent {\bf Proof of Theorem \ref{thm:diameter2}:}
	For $\delta=0$ the proof is given by the proof of Theorem 3 in \cite{flax07}. 
	The constant $\lambda=C_1/C_2$ in the proof of \cite[Theorem 3]{flax07} should be replaced by $\lambda=(C_1+\delta)/2C_2$, then the proof holds verbatim. 
	\qed

	\subsection{Bounding the expected number of mismatches} \label{bound.mismatches}
	In this section we will prove Lemma \ref{lem:miscouplings}.
	In the proof of the lemma, we rely on two claims, which will be stated now.
	The first claim bounds for any vertex and all time steps the expected degree:
	\eqn{\label{claim1}
		\mcexpec[]{d_v(\sigma)+\delta} \leq m C \br{\frac{\sigma}{v}}^a,
	}	
	where $C$ is some constant and
	\eqn{\label{def:a}
	a=m/\alpha\Theta.
	}
	The second claim is a technical one, which bounds the expectation of
		\renewcommand{\mcprob}[2][\sigma]{\hat{\mathbb P}_{\hspace{-.15em} #1} \hspace{-.20em} \br{#2}}
	\eqn{\label{Qdef}	
	Q_\sigma=\sum_{h \in {\cal E}_\sigma} \mR(x_h)\indic{T_{\sigma,n} \leq \hat T_{\sigma,n}}+\sum_{h \in \hat {\cal E}_\sigma} \mR(\hat x_h)\indic{T_{\sigma,n} > \hat T_{\sigma,n}}
	}
	from above. More precisely, for any $\sigma \geq \tau$,
	\eqn{\label{claim2}
		\mcexpec[]{Q_\sigma} \leq I_n\br{\mcexpec[]{\Delta_\sigma}+\mcexpec[]{d_\sigma(\tau)+\delta}+\mcexpec[]{\hat d_\sigma(\tau)+\delta}+2\Theta}.
	}
	Next, we will assume that the claims \eqref{claim1} and \eqref{claim2} do hold and we will show that Lemma \ref{lem:miscouplings} follows from these two claims. After the proof of Lemma \ref{lem:miscouplings}, we will prove both claims separately.

	\bigskip
		
	\noindent {\bf Proof of Lemma \ref{lem:miscouplings}:}	
	Let $\tau < \sigma \leq t$, then the number of mismatches is recursively defined as
	$$
	\Delta _{\sigma+1}=\Delta _{\sigma}+\sum_{i=1}^m \indic{\mb{\sigma+1}i \not = \mhb{\sigma+1}i},
	$$
	and, therefore, 
	\eqn{ \label{exp.rec}
	\mcexpec[]{\Delta _{\sigma+1}}=\mcexpec[]{\Delta _{\sigma}}+m\mcprob[]{\mb{\sigma+1}1 \not = \mhb{\sigma+1}1},
	}	
	since we draw the balls with replacement.
	
	Combining \eqref{eq:eq:diff1} and \eqref{eq:eq:diff2}, yields
	\eqn{\label{P:hfddsfds}
	\mcprob[\sigma]{\mb{\sigma+1}1 \not = \mhb{\sigma+1}1} \leq \frac{Q_\sigma+(d_\sigma(\tau)+\delta)\mR(x_\tau)+(\hat d_\sigma(\tau)+\delta)\mR(\hat x_\tau)}{\alpha\Theta I_n\sigma}.
	}
	Observe from \eqref{calcIf} that
	\begin{align*}
	\mcexpec[]{d_\sigma(\tau)\mR(x_\tau) \,|\, d_\sigma(\tau), x_\tau}&=
	d_\sigma(\tau)\mcexpec[]{F(|x_\tau-x_{\sigma+1}) \,|\, d_\sigma(\tau), x_\tau}
	\\&=d_\sigma(\tau)\int_S F(|x_\tau-u|)du=d_\sigma(\tau)I_n,
	\end{align*}
	and, hence
	$$
	\mcexpec[]{(d_\sigma(\tau)+\delta)\mR(x_\tau)}=\mcexpec[]{d_\sigma(\tau)+\delta}I_n.
	$$
	Similarly, 	$\mcexpec[]{(\hat d_\sigma(\tau)+\delta)\mR(\hat x_\tau)}=\mcexpec[]{\hat d_\sigma(\tau)+\delta}I_n$.
	Thus, taking expectations on both sides of \eqref{P:hfddsfds}, and using \eqref{claim2}, results in
	\begin{align} \label{P;fdssdafAS}
	\mcprob[]{\mb{\sigma+1}1 \not = \mhb{\sigma+1}1} &\leq \frac{\mcexpec[]{Q_\sigma}+(\mcexpec[]{d_\sigma(\tau)+\delta}I_n) +(\mcexpec[]{ \hat d_\sigma(\tau)+\delta}I_n)}{\alpha\Theta I_n\sigma},
	\end{align}
	Substituting \eqref{claim1} and \eqref{claim2} in \eqref{P;fdssdafAS}, yields
	\begin{align}
	\nonumber
	\mcprob[]{\mb{\sigma+1}1 \not = \mhb{\sigma+1}1} \leq \frac{\mcexpec[]{\Delta_\sigma}+\br{2\Theta+4mC(\sigma/\tau)^a}}{\alpha\Theta\sigma},
	\end{align}
	for some sufficiently large constant $C>0$.
	Therefore, we can bound the right hand side of equation \eqref{exp.rec} by,
	$$
	\mcexpec[]{\Delta_{\sigma+1}} 
		\leq \mcexpec[]{\Delta_{\sigma}} \br{1+\frac{a}{\sigma}} + \frac{2\Theta+4mC(\sigma/\tau)^a}{\alpha\Theta\sigma}
		=\mcexpec[]{\Delta_{\sigma}} \br{1+\frac{a}{\sigma}} + \frac{1+4C(\sigma/\tau)^a}{\sigma},
	$$
	where we used that $a=m/\alpha \Theta$ \eqref{def:a}, $1/\alpha \leq 1/2$ and $m/\Theta \leq 2$.	
	Finally, by taking the constant $C$ larger, we can replace the above inequality by
	$$
	\mcexpec[]{\Delta_{\sigma+1}} 
		\leq \mcexpec[]{\Delta_{\sigma}} \br{1+\frac{a}{\sigma}} + C \tau^{-a}\sigma^{a-1}.
	$$		
	We will now prove an upper bound for $\mexpec{\Delta_{\sigma+1}}$. To this end, we consider the solution of the recurrence relation  $q(\sigma+1)=q(\sigma)(1+a/\sigma)+b(\sigma)$, for $\sigma >\tau$, with initial condition $q(\tau)=c$. The solution is given by 
	$$
	q(\sigma)=\frac{\Gamma(\sigma+a)}{\Gamma(\sigma)}\sum_{s=\tau}^{\sigma-1} \frac{b(s)\Gamma(s+1)}{\Gamma(s+1+a)}+c\frac{\Gamma(\sigma+a)\Gamma(\tau)}{\Gamma(\sigma)\Gamma(\tau+a)}
.
	$$
		The above solution implies, that if one takes $b(s)=C\tau^{-a}s^{a-1} \leq C\tau^{-a}\frac{\Gamma(s+a)}{\Gamma(s+1)}$, then
		$$
	\mcexpec[]{\Delta_\sigma} \leq q(\sigma)=	\frac{\Gamma(\sigma+a)}{\Gamma(\sigma)}\sum_{s=\tau}^{\sigma-1} \frac{b(s)\Gamma(s+1)}{\Gamma(s+1+a)}
		\leq C\tau^{-a}\frac{\Gamma(\sigma+a)}{\Gamma(\sigma)}\sum_{s=\tau}^{\sigma-1} \frac{\Gamma(s+a)}{\Gamma(s+1+a)}
		.
		$$		
The summation can be bounded from above by $\log \sigma$, therefore, for some constant $C$,
$$
\mcexpec[]{\Delta_\sigma} 
		\leq C\br{\frac{\sigma}{\tau}}^a \log \sigma.
$$
This proves \eqref{anders1} and hence Lemma \ref{lem:miscouplings}, given the claims  \eqref{claim1} and \eqref{claim2}.
	\qed

	\bigskip\noindent{\bf Proof of \eqref{claim1}:}		
		Note that, see \eqref{def:M} and \eqref{rule.sel}, 
	\begin{align*}
		\mcexpec[\sigma]{d_{\sigma+1}(v)+\delta}		\nonumber	&=(d_{\sigma}(v)+\delta)+\mcexpec[\sigma]{d_{\sigma+1}(v)-d_{\sigma}(v)}
		\\&={(d_{\sigma}(v)+\delta)\br{1+m\frac{A_{\sigma,n}(x_v)}{M_{\sigma,n}(x_{\sigma+1})}}} 
			\mproef{}{\\&}\leq {(d_{\sigma}(v)+\delta)\br{1+m\frac{A_{\sigma,n}(x_v)}{\alpha \Theta I_n \sigma}}}. 
	\end{align*}
	Therefore, by taking expectations on both sides in the above display, and using \eqref{eq.shrtcut.0}, the value of $\mcexpec[]{d_{\sigma+1}(v)+\delta}$ is bound from above by
	\begin{align}\nonumber	
		\mcexpec[]{(d_{\sigma}(v)+\delta)\br{1+m\frac{\mcexpec[]{A_{\sigma,n}(x_v)\,|\,G_{\sigma}}}{\alpha\Theta I_n \sigma}}}
			=\br{1+\frac{m}{\alpha\Theta\sigma}}\mcexpec[]{(d_v(\sigma)+\delta)}.
	\end{align}
	Thus, by induction, and using $a=m/\alpha\Theta$,
		\begin{align*}
	\mcexpec[]{d_{\sigma+1}(v)+\delta}&
				=\br{\frac{\sigma+a}{\sigma}}\mcexpec[]{(d_v(\sigma)+\delta)}			
				=\frac{\Gamma(\sigma+a+1)\Gamma(v)}{\Gamma(\sigma+1)\Gamma(v+a)}\mcexpec[]{(d_v(v)+\delta)}.
	\end{align*}
	Finally, note that $d_v(v) \leq 2m$ and that $\delta$ is a constant, which implies the claim \eqref{claim1}. \qed
	
	\bigskip\noindent{\bf Proof of \eqref{claim2}:}		
	Observe that for $i=1,\ldots,m$ and $s\geq\tau$, see \eqref{P:my},
	\eqn{\label{eq:rel.v.y}
	\indic{\mv si \not = \mhv si} + \indic{\mv si = \mhv si=\tau} = \indic{\my si \not = \mhy si}=\indic{x_{\mv s i} \not =\, \hat x_{\mhv s i}},
	}
	since, $x_i=\hat x_i$ if $i\not =\tau$ and $x_\tau \not = \hat x_\tau$. The above statement is stronger than \eqref{eventsin}.
	Using the definition of ${\cal E}_\sigma$, see \eqref{diffE}, and \eqref{eq:rel.v.y}, we have that
	$$
	\sum_{h \in {\cal E}_\sigma} \mR(x_h)
			\leq \sum_{s=\tau}^\sigma\sum_{i=1}^m \mR(\my si )\indic{\my si \not = \mhy si},
	$$
	and we can bound $\sum_{h \in \hat{\cal E}_\sigma} \mR(x_h)$ in a similar way. 
	Hence, we can bound \eqref{Qdef} from above by	
	\begin{align}	
	Q_\sigma \leq \nonumber
			&\sum_{s=\tau}^\sigma\sum_{i=1}^m \indic{\my si \not = \mhy si} \Bigl(\mR(\my si )\indic{T_{\sigma,n}) \leq \hat T_{\sigma,n}} + \mR(\mhy si )\indic{T_{\sigma,n} > \hat T_{\sigma,n}}\Bigr)
			\\&=\sum_{s=\tau}^\sigma\sum_{i=1}^m \indic{\my si \not = \mhy si} \mR(\mhy si )
			\nonumber\\&\hspace{.5cm}+
			\indic{T_{\sigma,n} \leq \hat T_{\sigma,n}}\sum_{s=\tau}^\sigma\sum_{i=1}^m \indic{\my si \not = \mhy si}\br{\mR(\my si ) - \mR(\mhy si )}. \label{finSumd}
	\end{align}
		Next, we will show that the rightmost double sum of \eqref{finSumd} can be bounded by 
		$$
		(m+\delta)(\mR(\hat x_\tau)-\mR(x_\tau)).
		$$					
		For this, we rewrite $T_{\sigma,n}$ as 
		\begin{align} \nonumber
			T_{\sigma,n}=\sum_{s=1}^\sigma (d_s(\sigma)+\delta)\mR(x_s)
					=\sum_{s=1}^\sigma d^-_s(\sigma) \mR(x_s) 
								+ (m+\delta)\sum_{s=1}^\sigma \mR(x_s).
		\end{align}
		Note that
		\begin{align}
		\sum_{s=1}^\sigma d^-_s(\sigma) \mR(x_s) \nonumber
		&=\sum_{s=1}^\sigma \bra{\sum_{t=s}^\sigma\sum_{i=1}^m \indic{\mv t i =s}} \mR(x_s)
		\\&=\sum_{t=1}^\sigma \sum_{i=1}^m \bra{\sum_{s=1}^t \indic{\mv t i =s} \mR(x_s)}
		=\sum_{t=1}^\sigma \sum_{i=1}^m \mR(\my t i)\nonumber.
		\end{align}
		Therefore,
		\eqn{\nonumber
		T_{\sigma,n}=\sum_{s=1}^\sigma \sum_{i=1}^m \mR(\my si)+ (m+\delta)\sum_{s=1}^\sigma \mR(x_s)
		}
		and a similar result hold for $\hat T_{\sigma,n}$.
		The difference of these two expressions equals:
		$$
		T_{\sigma,n}-\hat T_{\sigma,n} 
		=\sum_{s=1}^\sigma \sum_{i=1}^m (\mR(\my si)-\mR(\mhy si))+  (m+\delta)(\mR(x_\tau)-\mR(\hat x_\tau)),
		$$
		or
		$$
		\sum_{s=1}^\sigma \sum_{i=1}^m (\mR(\my si)-\mR(\mhy si))=T_{\sigma,n}-\hat T_{\sigma,n}-(m+\delta)(\mR(x_\tau)-\mR(\hat x_\tau)).
		$$
		Hence,
		\eqn{\label{finSumdR}
		\indic{T_{\sigma,n} \leq \hat T_{\sigma,n}}\sum_{s=1}^\sigma \sum_{i=1}^m (\mR(\my si)+\mR(\mhy si))
		\leq (m+\delta)(\mR(x_\tau)-\mR(\hat x_\tau)).
		}
		Substituting \eqref{finSumdR} in \eqref{finSumd} yields,
		\begin{align}
		Q_\sigma \nonumber
			\leq \sum_{s=\tau}^\sigma\sum_{i=1}^m \indic{\my si \not = \mhy si} \mR(\mhy si )
			+
			(m+\delta)(\mR( x_\tau)+\mR(\hat x_\tau)).
		\end{align}
		Taking the conditional expectation with respect to the graphs $G_\sigma$ and $\hat G_\sigma$ results in
		\begin{multline}
		\mcexpec[]{Q_\sigma\,|\,G_\sigma,\hat G_\sigma}	
			= \sum_{s=\tau}^\sigma\sum_{i=1}^m \indic{\my si \not = \mhy si} \mcexpec[]{\mR(\mhy si )\,|\,G_\sigma,\hat G_\sigma}
			\\+
			(m+\delta)\br{\mcexpec[]{\mR(x_\tau)\,|\,G_\sigma,\hat G_\sigma}+\mcexpec[]{\mR(\hat x_\tau)\,|\,G_\sigma,\hat G_\sigma}}.\label{P:fdsdsfaasd}
		\end{multline}
		For any fixed value $x\in S$ and using \eqref{calcIf}, we have that
		$$
		\mcexpec[]{\mR(x)\,|\,G_\sigma,\hat G_\sigma}=\mcexpec[]{F(|x-x_{\sigma+1}|)\,|\,G_\sigma,\hat G_\sigma}=I_n,
		$$
		which, in turns, yields that \eqref{P:fdsdsfaasd} can be rewritten as
		$$
		\mcexpec[]{Q_\sigma\,|\,G_\sigma,\hat G_\sigma}
			= I_n\br{\sum_{s=\tau}^\sigma\sum_{i=1}^m \indic{\my si \not = \mhy si}+2(m+\delta)}
			.
		$$
		Note that \eqref{eq:rel.v.y} implies
		$$
		\sum_{s=\tau}^\sigma\sum_{i=1}^m \indic{\my si \not = \mhy si}
		=\sum_{s=\tau}^\sigma\sum_{i=1}^m \indic{\mv si \not = \mhv si}
		+\sum_{s=\tau}^\sigma\sum_{i=1}^m \indic{\mv si = \mhv si=\tau}
		\leq \Delta_\sigma+d_\sigma(\tau).
		$$
		Thus,
		$$
		\mcexpec[]{Q_\sigma\,|\,G_\sigma,\hat G_\sigma} \leq I_n\Bigl
(\Delta_\sigma+d_\sigma(\tau)+2(m+\delta)\Bigr)
	=I_n\Bigl(\Delta_\sigma+(d_\sigma(\tau)+\delta)+(2m+\delta)\Bigr).
		$$
		For convenience, we will use the following weaker statement:
		$$
		\mcexpec[]{Q_\sigma\,|\,G_\sigma,\hat G_\sigma} \leq 
		I_n(\Delta_\sigma+(d_\sigma(\tau)+\delta)+(\hat d_\sigma(\tau)+\delta)+2\Theta),
		$$
		where we replaced $(2m+\delta)$ by $2\Theta$, see \eqref{eq:def:theta}.
		Finally, by taking the expectation on both sides  in the above display, we obtain the claim \eqref{claim2}.
	\qed

 \newcommand{\shortversion}[2]{\textcolor{blue}{#1}\textcolor{red}{[[#2]]}}
	\newcommand{\longversion}[1]{\textcolor{red}{#1}}
	
		\bigskip \noindent {\bf Acknowledgments: }  
		The work of HvdE was supported in part by Netherlands Organisation for Scientific Research (NWO). I thank Gerard Hooghiemstra for carefully reading the manuscript and many suggestions, which resulted in a more accessible paper.

  \bibliographystyle{apt}

\end{document}